\numberwithin{equation}{section}
\newtheorem{theorem}{Theorem}[section]
\newtheorem{proposition}[theorem]{Proposition}
\newtheorem{corollary}[theorem]{Corollary}
\newtheorem{lemma}[theorem]{Lemma}
\newtheorem{remark}[theorem]{Remark}
\newcommand{\cali}[1]{\mathscr{#1}}
\newcommand{\dist}{\mathop{\mathrm{dist}}\nolimits}
\newcommand{\ddc}{dd^c}
\newcommand{\dbar}{\overline\partial}
\newcommand{\ep}{\epsilon}
\newcommand{\Cc}{\cali{C}}
\newcommand{\Dc}{\cali{D}}
\newcommand{\Fc}{\cali{F}}
\newcommand{\Uc}{\cali{U}}
\newcommand{\C}{\mathbb{C}}
\newcommand{\R}{\mathbb{R}}
\renewcommand\P{\mathbb{P}}
\newcommand{\lp}{\langle}
\newcommand{\rp}{\rangle}
\newcommand{\dsh}{\mathrm{DSH}}
\title{Exponential mixing property for automorphisms of compact K\"ahler manifolds}
\author{Hao Wu}
\address{Department of Mathematics,  National University of Singapore - 10, Lower Kent Ridge Road - Singapore 119076}
\email{e0011551@u.nus.edu}
\date{}
\begin{document}
	
	\begin{abstract}
Let $f$ be a holomorphic automorphism of a compact K\"ahler manifold. Assume that $f$ admits a unique maximal dynamic degree $d_p$ with only one eigenvalue of maximal modulus. Let $\mu$ be its equilibrium measure.  In this paper, we prove that $\mu$ is exponentially mixing for all d.s.h.\ test functions.
   \end{abstract}
	
	\maketitle
	
	\medskip
	
	\noindent {\bf Classification AMS 2010}: 37F, 32H.
	
	\medskip
	
	\noindent {\bf Keywords:} dynamic degree, equilibrium measure, exponential mixing, super-potential.
	
	\medskip

\section{Introduction and main results}

Let $(X,\omega)$ be a compact K\"ahler manifold of dimension $k$ and let $f$ be a holomorphic automorphism of $X$. Denote by $f^*$ the pull-back operator acting on the Hodge cohomology groups $H^{*,*}(X,\C)$. Recall that the \textit{dynamic degree of order $q$} of $f$ is the spectral radius of $f^*$ on   $H^{q,q}(X,\C)$, and denoted by $d_q$. We have $d_0=d_k=1$. Khovanskii-Teissier-Gromov \cite{gromov2} proved that the function $q\mapsto \log d_q$ is concave. Thus there are integers $0\leq p\leq p'\leq k$ such that $$1=d_0<\cdots< d_p=\cdots=d_{p'}>\cdots>d_k=1.$$ 

When $p=p'$ and in addition, when $f^*$ acting on $H^{p,p}(X,\C)$, admits only one eigenvalue of maximal modulus (necessary equal to $d_p$), there is a  unique invariant probability measure $\mu:=T_+\wedge T_-$, where $T_+$  is the \textit{Green $(p,p)$-current} of $f$ and $T_-$ is the Green $(k-p,k-p)$-current of $f^{-1}$. They satisfy $f^*(T_+)=d_pT^+$ and $f_*(T_-)=d_{k-p}T_-$. Moreover, for any positive closed $(p,p)$-current (resp. $(k-p,k-p)$-current) $S$ of mass $1$, we have $d_p^{-n}(f^n)^*(S)$ (resp. $d_{k-p}^{-n}(f^n)_*(S)$) converge to $T_+$ (resp. $T_-$). And $T_+$ (resp. $T_-$) is the unique positive closed current in the class $\{T_+\}$ (resp. $\{T_-\}$).  The measure $\mu$ is called the \textit{equilibrium measure} of $f$.  For the constructions of $\mu,T_+,T_-$, the readers may refer to \cite{super-kahler}. And see e.g.\ \cite{oguiso-entropy,oguiso-tr} for interesting examples.

Recall that a function  is \textit{quasi-plurisubharmonic} (\textit{quasi-p.s.h.}\ for short) on $X$ if locally it is the difference of a plurisubharmonic (p.s.h.\ for short) function and a smooth one.  The following theorem is our first main result.

\begin{theorem}\label{main}
	Let $f$ be a holomorphic automorphism on a compact K\"ahler manifold $X$ of dimension $k$ and let $\mu$ be its equilibrium measure. Let $d_q$ be the dynamic degree of order $q$, $0\leq q\leq k$. Assume that there is a integer $p$ such that $d_p$ is strictly large than other dynamic degrees and $d_p$ admits only one eigenvalue of maximal modulus $d_p$. Then $\mu$ is exponentially mixing for bounded quasi-p.s.h.\ observables. More precisely, if $\delta$ is a constant such that $\max\{d_{p-1},d_{p+1}\}<\delta<d_p$ and all the eigenvalues of $f^*$, acting on $H^{p,p}(X,\C)$, except $d_p$, are strictly smaller than $\delta$. Then there exists a constant $c>0$,   such that 
	$$\Big| \int (\varphi\circ f^n)\psi d\mu-\Big(\int \varphi d\mu\Big)\Big(\int \psi d\mu\Big)\Big|\leq c(d_p/\delta)^{-n/2}\|\varphi\|_{L^\infty}\|\psi\|_{L^\infty}$$ for all $n\geq 0$ and all  bounded quasi-p.s.h.\ functions $\varphi$ and $\psi$ satisfy $\ddc\varphi\geq -\omega,\ddc\psi\geq -\omega$.
\end{theorem}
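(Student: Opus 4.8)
The plan is to combine three ingredients: an elementary reduction to mean-zero observables, a symmetrization that splits the time $n$ into two halves, and the quantitative equidistribution $d_p^{-m}(f^m)^*S\to T_+$ whose speed is dictated by the cohomological spectral gap between $d_p$ and $\delta$. Throughout, the hypothesis $\ddc\varphi\ge-\omega$ (and likewise for $\psi$) is what places $\varphi$ in a fixed, bounded family of $\omega$-quasi-p.s.h.\ functions; this compactness is what will make the constant $c$ uniform.

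First I would reduce to the case $\int\varphi\,d\mu=\int\psi\,d\mu=0$. Writing $\varphi=c_\varphi+\tilde\varphi$ and $\psi=c_\psi+\tilde\psi$ with $c_\varphi=\int\varphi\,d\mu$, $c_\psi=\int\psi\,d\mu$ and using the $f$-invariance of $\mu$ (so that $\int(\tilde\varphi\circ f^n)\,d\mu=\int\tilde\varphi\,d\mu=0$), the two cross terms vanish and the left-hand side of the theorem equals $\int(\tilde\varphi\circ f^n)\tilde\psi\,d\mu$. Subtracting constants does not change $\ddc\varphi$, so $\tilde\varphi,\tilde\psi$ still satisfy the curvature bound and their $L^\infty$ norms change by at most a controlled amount. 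Next, using invariance once more I would symmetrize: with $m=\lfloor n/2\rfloor$, a change of variables by $f^{-m}$ gives $\int(\varphi\circ f^n)\psi\,d\mu=\int(\varphi\circ f^{m})(\psi\circ f^{-m})\,d\mu$, absorbing the at most one leftover iterate into the constant. The effect is that $\varphi\circ f^m$ interacts with $T_+$ through $d_p^{-m}(f^m)^*T_+=T_+$, while $\psi\circ f^{-m}$ interacts with $T_-$ through $d_{k-p}^{-m}(f^m)_*T_-=T_-$, and each observable is iterated only $m\approx n/2$ times, which is the source of the exponent $n/2$ in the final bound.

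The analytic core is to express the symmetrized correlation as a pairing of currents on which $d_p^{-m}(f^m)^*$ acts, and then to invoke exponential convergence to the Green current. Concretely, I would fix smooth representatives $\Omega_\pm$ of the classes $\{T_\pm\}$ and quasi-potentials $U_\pm$ with $T_\pm-\Omega_\pm=\ddc U_\pm$, and rewrite $\int(\varphi\circ f^m)(\psi\circ f^{-m})\,T_+\wedge T_-$ after integrating $\ddc$ by parts onto the observables, so that the integrand features $\ddc\varphi$ and $\ddc\psi$ wedged against the remaining currents; the bounds $\ddc\varphi\ge-\omega$, $\ddc\psi\ge-\omega$ then convert these into \emph{positive} closed currents to which the dynamical estimates apply. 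Writing $T_+=d_p^{-m}(f^m)^*T_+$ and pushing the pull-back through, the decay is governed by the action of $d_p^{-m}(f^m)^*$ on cohomology classes transverse to the top eigenline: by hypothesis all eigenvalues of $f^*$ on $H^{p,p}(X,\C)$ other than $d_p$ are bounded by $\delta$, so this transverse action decays like $(\delta/d_p)^m=(d_p/\delta)^{-n/2}$, up to a polynomial factor in $m$ absorbed by shrinking the ratio slightly. Projecting onto the top eigenline produces exactly the product $\big(\int\varphi\,d\mu\big)\big(\int\psi\,d\mu\big)$, which is $0$ after the first reduction, so only the transverse, exponentially small part survives.

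The main obstacle is to promote this cohomological spectral gap to an estimate at the level of currents, \emph{uniformly} over all admissible $\varphi,\psi$. Knowing that a class contracts by $(\delta/d_p)^m$ does not by itself control the mass or the potential of the corresponding current; one must show that the super-potentials of the currents $d_p^{-m}(f^m)^*(\Omega_+ + \ddc\varphi\text{-type terms})$ stay uniformly bounded, indeed converge, along the orbit. This is where I expect the real work to lie: establishing a priori bounds, uniform on the family $\{\,\ddc\cdot\ge-\omega\,\}$, for the super-potentials of the currents produced at each step, so that $d_p^{-m}(f^m)^*\to T_+$ holds in the super-potential metric with the cohomological rate and with a constant independent of the test functions. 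The symmetric splitting into two blocks of length $m$ is precisely what keeps these super-potentials in a fixed bounded set after one half of the iteration, allowing the quantitative convergence to be applied to the other half; combining the two halves yields the bound $c\,(d_p/\delta)^{-n/2}\|\varphi\|_{L^\infty}\|\psi\|_{L^\infty}$. Finally, I would first prove the estimate for smooth $\varphi,\psi$ and then pass to general bounded quasi-p.s.h.\ observables by regularization, using that $\mu$ and $T_\pm$ integrate such functions continuously.
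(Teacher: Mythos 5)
Your overall skeleton---symmetrizing the time into two halves via the invariance of $\mu$, passing to a product system so that $\varphi$ interacts with the pull-backs and $\psi$ with the push-forwards, converting the cohomological spectral gap between $d_p$ and $\delta$ into a current-level estimate via super-potentials, and handling non-smooth observables by regularization at the end---matches the paper's strategy. But there is a genuine gap at the final step, and it is exactly the point where this problem differs from the smooth case of \cite{exp-kahler}. When you regularize a bounded quasi-p.s.h.\ $\phi$ by a decreasing family $\phi_\ep\downarrow\phi$ with $\|\phi_\ep\|_{\Cc^2}\lesssim\ep^{-2}$ and balance $\ep$ against $n$, monotonicity gives $\lp S_n\wedge T_-,\phi\rp\leq\lp S_n\wedge T_-,\phi_\ep\rp$ for free because $S_n\wedge T_-$ is a positive measure; the reverse inequality would require a bound on $\lp S_n\wedge T_-,\phi_\ep-\phi\rp$ uniform in $n$, and no such bound is available (the measures $S_n\wedge T_-$ are not known to be uniformly moderate or to have uniformly H\"older super-potentials). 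So the regularization yields only a \emph{one-sided} estimate, which is why Proposition \ref{mainprop} is stated as an upper bound only; the paper remarks explicitly that the absence of a lower bound is what prevents one from concluding as in \cite{exp-kahler}. Your plan to ``pass to general bounded quasi-p.s.h.\ observables by regularization, using that $\mu$ and $T_\pm$ integrate such functions continuously'' stalls precisely here: continuity of the limit pairing does not control the approximation error against the non-limit measures $S_n\wedge T_-$.

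The missing idea is the combinatorial device imported from \cite{wu-mix}: one introduces the eight functions $\Phi_{jl}^\pm(z,w)=\varphi_j^\pm(z)\psi_l^\pm(w)$ built from $\varphi^2+j\varphi\pm A$ and $\pm(\psi^2+j\psi)+A$, proves (Lemma \ref{quasi-psh}) that these particular combinations are quasi-p.s.h.\ on $X\times X$ for $A$ large---note that the naive product $\varphi(z)\psi(w)$ of two quasi-p.s.h.\ functions is \emph{not} quasi-p.s.h., a point your sketch also passes over when it pairs the correlation against the diagonal---and then applies the one-sided estimate of Proposition \ref{mainprop} to each of them in the product system $F(z,w)=(f^{-1}(z),f(w))$ with the current $[\Delta]$. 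Two suitable linear combinations with non-negative coefficients make the quadratic and linear terms cancel by invariance of $\mu$, leaving one-sided bounds for both $+$ and $-$ of the correlation, hence the two-sided bound of the theorem. Without this step, or a genuinely different substitute for the missing lower bound, the proof is incomplete.
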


The conditions $\ddc\varphi\geq -\omega,\ddc\psi\geq -\omega$ in Theorem \ref{main} relate to the $*$-norm defined in section \ref{section2}.  Another version of Theorem \ref{main} has been proved in \cite{exp-kahler} for $\varphi,\psi\in \Cc^2$ and it can be extended to $\Cc^\alpha$ case, $0<\alpha\leq 2$, using interpolation theory between Banach spaces. In this case, one considers the new system $(z,w)\mapsto \big(f^{-1}(z),f(w)\big)$ on $X\times X$ and the test function $\varphi(z)\psi(w)$, which plays a linear ``role" in the new system. Since $\varphi(z)\psi(w)$ is of class $\Cc^2$ and in particular, it is H\"older continuous, some estimates of super-potentials on the currents with H\"older continuous super-potentials  imply the desired result.

However, in the study of complex dynamics, sometimes we need to investigate the behaviors of the functions with some singularities under the action of $f$. For example, the class of quasi-p.s.h.\ functions or d.s.h.\ functions (see the definition below). When $\varphi$ and $\psi$ are not of class of $\Cc^2$, then  idea in \cite{exp-kahler} can not be directly applied. In this case, firstly, the super-potentials may not be well defined on the space of non-smooth currents. Secondly, when $\varphi$ and $\psi$ are not in $\Cc^2$,   the function $\varphi(z)\psi(w)$ will not be H\"older continuous any more. In the proof, we  do some regularization of quasi-p.s.h.\ functions. After that we combine the idea in \cite{exp-kahler} with some  techniques in \cite{wu-mix} to prove the main theorem.  Similarly  estimates of super-potentials on the currents with H\"older continuous super-potentials also are obtained at the end of section \ref{section2}. \\

Recall that a function  $u$ on $X$ with values in $\R\cup\{\pm\infty\}$ is said to be \textit{d.s.h.}\ if outside a pluripolar set it is equal to a difference of two quasi-p.s.h.\ functions. Two d.s.h.\ functions are identified when they are equal out of a pluripolar set.   Denote the set of d.s.h.\ functions by $\dsh(X)$.  Clearly it is a vector space and equips with a norm \[\|u\|_{\dsh}:=\Big|\int_{X} u\omega^k\Big|+\min\|T^{\pm}\|,\] where  the minimum is taken on all  positive closed $(1,1)$-currents $T^{\pm}$ such that  $\ddc u=T^+-T^-$.

A positive measure $\nu$ on $X$ is said to be \textit{moderate} if  for any bounded family $\Fc$ of d.s.h.\ functions on $X$, there are constants $\alpha>0$ and $c>0$ such that \[\nu\{z\in X:|\psi(z)|>M\}\leq ce^{-\alpha M}\] for $M\geq 0$ and $\psi\in\Fc$ (see \cite{dinh-exponential,dinh-dynamique,dinh-sibony:cime}). The papers \cite{dinh-exponential,super-kahler} show that if $f$ is a holomorphic automorphism of a compact K\"ahler surface or more generally, on a compact K\"ahler manifold, then the equilibrium measure $\mu$ of $f$ is moderate. Using the moderate property of $\mu$ and following the same approach as in the proof of   \cite[Theorem 1.3]{wu-mix}, we get the following theorem, which removes the boundedness conditions of $\varphi$ and $\psi$.

\begin{theorem}\label{second-theorem}
Let $f,d_p,\mu$ be as in Theorem \ref{main}. Then the equilibrium measure $\mu$ is exponentially mixing for all  d.s.h\ observables. More precisely, if $\delta$ is a constant such that $\max\{d_{p-1},d_{p+1}\}<\delta<d_p$ and all the eigenvalues of $f^*$, acting on $H^{p,p}(X,\C)$, except $d_p$, are strictly smaller than $\delta$. Then for any two d.s.h.\ functions $\varphi,\psi$, there exists a constant $c>0$,   such that	$$\Big| \int (\varphi\circ f^n)\psi d\mu-\Big(\int \varphi d\mu\Big)\Big(\int \psi d\mu\Big)\Big|\leq c(d_p/\delta)^{-n/2}$$ for all $n\geq 0$.
\end{theorem}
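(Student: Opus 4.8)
The plan is to bootstrap Theorem \ref{second-theorem} from the already-established Theorem \ref{main}, which handles bounded quasi-p.s.h.\ observables, using the moderate property of the equilibrium measure $\mu$ to control the tails of unbounded d.s.h.\ functions. The essential idea is a truncation argument: given d.s.h.\ functions $\varphi$ and $\psi$, I would split each into a bounded part and a small error. First I would normalize, replacing $\varphi$ by $\varphi-\int\varphi\,d\mu$ and $\psi$ by $\psi-\int\psi\,d\mu$, so that both have zero $\mu$-average; the quantity to estimate then becomes $|\int(\varphi\circ f^n)\psi\,d\mu|$, and exploiting the invariance of $\mu$ under $f$ we may symmetrize the roles of $\varphi$ and $\psi$. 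For a threshold $M=M_n$ to be chosen, set $\varphi_M:=\max(-M,\min(M,\varphi))$ the truncation at level $M$, and write $\varphi=\varphi_M+(\varphi-\varphi_M)$, similarly for $\psi$.

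Next I would control the two ingredients separately. For the bounded parts $\varphi_M,\psi_M$, the truncation of a d.s.h.\ function is again quasi-p.s.h.\ with $\ddc$ bounded below by a fixed multiple of $\omega$ depending only on $\|\varphi\|_{\dsh}$ (after suitable rescaling so that the hypotheses $\ddc\varphi_M\geq-\omega$ of Theorem \ref{main} hold), so Theorem \ref{main} yields a mixing bound of order $(d_p/\delta)^{-n/2}\|\varphi_M\|_{L^\infty}\|\psi_M\|_{L^\infty}\leq c(d_p/\delta)^{-n/2}M^2$. For the error terms, I would use the moderate property: since $\{\varphi,\psi\}$ (after normalization) lie in a bounded family of d.s.h.\ functions, there are $\alpha,c>0$ with $\mu\{|\varphi|>M\}\leq ce^{-\alpha M}$, and the difference $\varphi-\varphi_M$ is supported on $\{|\varphi|>M\}$ where it grows at most linearly above $M$. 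Combining Cauchy--Schwarz with the exponential tail estimate gives $\int|\varphi-\varphi_M|\,d\mu$ and the relevant $L^2(\mu)$ quantities bounded by something like $Ce^{-\alpha M/2}$, so the contribution of all cross terms involving an error decays exponentially in $M$.

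Finally I would balance the two competing estimates by optimizing over $M=M_n$. The bounded part contributes $\sim M^2(d_p/\delta)^{-n/2}$ while the error contributes $\sim e^{-\alpha M/2}$; choosing $M_n$ to be a suitable constant multiple of $n$, say $M_n=\beta n$ with $\beta$ small relative to $\alpha^{-1}\log(d_p/\delta)$, makes the polynomial factor $M_n^2$ harmless against the geometric decay $(d_p/\delta)^{-n/2}$ while simultaneously forcing $e^{-\alpha M_n/2}$ to decay at least as fast as $(d_p/\delta)^{-n/2}$. After adjusting the constant $c$ to absorb the now-fixed $\|\varphi\|_{\dsh}$ and $\|\psi\|_{\dsh}$ dependence, one obtains the claimed bound $c(d_p/\delta)^{-n/2}$ with $c$ depending on $\varphi,\psi$ through their d.s.h.\ norms.

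I expect the main obstacle to be the careful bookkeeping in the error estimate rather than any single deep difficulty, since the analytic heavy lifting is already contained in Theorem \ref{main} and in the moderate estimate. Specifically, the subtle point is that the truncation $\varphi_M$ must simultaneously satisfy the curvature hypothesis of Theorem \ref{main} (so that the mixing bound applies with the correct constant) and remain close to $\varphi$ in $L^1(\mu)$; one must verify that truncating a d.s.h.\ function does not destroy the lower bound $\ddc\varphi_M\geq-C\omega$, which follows because the maximum and minimum of quasi-p.s.h.\ functions with a common curvature lower bound preserve that bound. Handling the error term via the moderate property requires knowing that the family obtained from $\varphi,\psi$ (and their truncations, or rather their d.s.h.\ norms) is genuinely bounded, which is where one invokes that $\{\varphi,\psi\}$ being fixed d.s.h.\ functions places them in a bounded d.s.h.\ family; this mirrors exactly the scheme of \cite[Theorem 1.3]{wu-mix}, so the argument there can be transcribed with only the replacement of the relevant mixing input by Theorem \ref{main}.
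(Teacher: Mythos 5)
Your overall scheme (truncate, apply Theorem \ref{main} to the bounded parts, control the tails via the moderate property of $\mu$, then optimize $M\sim n$) is exactly the paper's, but two steps as written do not go through.

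First, the truncation $\varphi_M:=\max(-M,\min(M,\varphi))$ of a d.s.h.\ function is \emph{not} quasi-p.s.h.\ in general, and your justification --- that ``the maximum and minimum of quasi-p.s.h.\ functions with a common curvature lower bound preserve that bound'' --- is false for the minimum: $\min$ of two p.s.h.\ functions need not be p.s.h.\ (e.g.\ $\min(\Re z,-\Re z)=-|\Re z|$). Moreover a d.s.h.\ function is only a \emph{difference} of quasi-p.s.h.\ functions, so even $\max(\varphi,-M)$ need not be quasi-p.s.h.\ for d.s.h.\ $\varphi$. The paper avoids this by first reducing, by linearity, to the case where $\varphi$ and $\psi$ are \emph{negative quasi-p.s.h.} (every quasi-p.s.h.\ function on a compact manifold is bounded above, so no upper truncation is ever needed), and then setting $\varphi_1=\max\{\varphi,-M\}$, which is a bounded quasi-p.s.h.\ function with the same bound $\ddc\varphi_1\geq-\omega$. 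Your subtraction of the mean $\int\varphi\,d\mu$ also interferes with this reduction and is not needed.

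Second, your balancing of $M$ does not close. With $M_n=\beta n$, the tail term $e^{-\alpha M_n/2}$ decays at least as fast as $(d_p/\delta)^{-n/2}$ only if $\beta\geq\alpha^{-1}\log(d_p/\delta)$, and then the main term is $M_n^2(d_p/\delta)^{-n/2}\sim n^2(d_p/\delta)^{-n/2}$, which is \emph{not} $O\big((d_p/\delta)^{-n/2}\big)$; taking $\beta$ ``small'' shrinks neither the $n^2$ nor the conflict. The paper's fix is to first apply Theorem \ref{main} with a strictly smaller intermediate constant $\delta_0$ satisfying $\max\{d_{p-1},d_{p+1}\}<\delta_0<\delta$ and the same eigenvalue condition (such a $\delta_0$ exists because the eigenvalue gap is open); then $n^2(d_p/\delta_0)^{-n/2}\lesssim(d_p/\delta)^{-n/2}$ absorbs the polynomial factor. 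With these two corrections your argument coincides with the paper's proof.
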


In Theorem \ref{second-theorem}, the constant $c$ depends on $\varphi$ and $\psi$. It is not hard to see that we can take a common $c$ for any compact family of d.s.h.\ functions.\\

Now we consider a particular case. When $X$ is a compact K\"ahler surface and  $f$ is of positive entropy, Gromov \cite{gromov} and Yomdin \cite{yomdin} showed that the entropy  is equal to $\log d_1$. Thus in this case, $d_1>1$. Moreover, Cantat \cite{cantat} proved that  the eigenvalues of $f^*$, acting on $H^{1,1}(X,\C)$, are   $d_1,1/d_1$ and others with modulus $1$. Thus we get the following corollary.

\begin{corollary}
	Let $f$ be a holomorphic automorphism of positive entropy on a compact K\"ahler surface $X$. Then the equilibrium measure $\mu$ is exponentially mixing for all d.s.h.\ observables.
\end{corollary}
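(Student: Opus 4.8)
The plan is to deduce the corollary directly from Theorem~\ref{second-theorem}, so the task reduces to checking that a positive-entropy automorphism of a compact K\"ahler surface satisfies all the hypotheses of that theorem. Since $\dim X=2$, the only dynamic degrees are $d_0=1$, $d_1$ and $d_2=1$. By the Gromov--Yomdin theorem the topological entropy of $f$ equals $\log d_1$; as $f$ has positive entropy this forces $d_1>1$. Together with $d_0=d_2=1$ and the Khovanskii--Teissier--Gromov concavity of $q\mapsto\log d_q$, this shows that the maximal dynamic degree is attained uniquely at $p=p'=1$, with $d_p=d_1$ strictly larger than $d_0$ and $d_2$.

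Next I would invoke Cantat's description of the spectrum: the eigenvalues of $f^*$ acting on $H^{1,1}(X,\C)$ are $d_1$, $1/d_1$, and further eigenvalues all of modulus $1$. Because $d_1>1$, the value $d_1$ is the unique eigenvalue of maximal modulus, which is precisely the second standing hypothesis needed to define the equilibrium measure $\mu$ and to apply Theorem~\ref{second-theorem}. It then remains only to produce an admissible gap constant $\delta$: any $\delta$ with $1<\delta<d_1$ will do, since $\max\{d_{p-1},d_{p+1}\}=\max\{d_0,d_2\}=1$, while every eigenvalue of $f^*$ on $H^{1,1}(X,\C)$ other than $d_1$ has modulus at most $1$, hence strictly less than $\delta$.

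With these verifications in place, Theorem~\ref{second-theorem} applies without change and yields, for any two d.s.h.\ functions $\varphi,\psi$, a constant $c>0$ such that the correlations decay at the rate $c(d_1/\delta)^{-n/2}$. Since $d_1/\delta>1$ by the choice of $\delta$, this is genuine exponential decay in $n$, establishing exponential mixing for all d.s.h.\ observables. The only real input beyond the general theorem is the pair of citations (Gromov--Yomdin to guarantee $d_1>1$, and Cantat to pin down the full spectrum on $H^{1,1}$); I do not expect any substantive obstacle, as the argument is a direct specialization and the analytic heart of the matter has already been carried out in Theorems~\ref{main} and~\ref{second-theorem}.
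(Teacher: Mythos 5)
Your proposal is correct and follows exactly the paper's route: the paragraph preceding the corollary deduces it from Theorem \ref{second-theorem} using Gromov--Yomdin to get $d_1>1$ and Cantat's description of the spectrum of $f^*$ on $H^{1,1}(X,\C)$, just as you do. The only detail you add beyond the paper is the explicit choice of $\delta$ with $1<\delta<d_1$, which is a harmless and correct elaboration.
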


In this paper, the symbols $\lesssim$ and $\gtrsim$ stand for inequalities up to a multiplicative constant. 

	\noindent\textbf{Acknowledgements:} This work was supported by the grant:  AcRF Tier 1 R-146-000-248-114 from National University of Singapore.  I also would like to thank the referees
	for their remarks. \\

\vspace{0.5cm}

\section{Super-potentials of currents}\label{section2}
In this section, we will introduce the notion called super-potential. The readers may refer to \cite{dinh-sibony:acta,super-kahler} for details. Some estimates of super-potentials on a family of  currents with H\"older continuous super-potentials are obtained at the end of this section.

Denote by $\Dc_q$ the real space that generated by all positive closed  $(q,q)$-currents on $X$. Define a norm $\|\cdot\|_*$ on $\Dc_q$ by $$\|\Omega\|_*:=\min\{\|\Omega^+\|+\|\Omega^-\|\},$$ where $\|\Omega^\pm\|:=\lp\Omega^\pm,\omega^{k-q}\rp$ are the mass of $\Omega^\pm$, and the minimum is taken over all the positive closed currents $\Omega^\pm$ with $\Omega=\Omega^+-\Omega^-$. Observe that $\|\Omega^\pm\|$ only depend on the cohomology classes of $\Omega^\pm$ in $H^{q,q}(X,\R)$. We have the following lemma.

\begin{lemma}\label{2.1}
		Let $\Omega$ be a real $\ddc$-exact $(q,q)$-current on $X$ and assume $\Omega\geq-S$ for some positive closed $(q,q)$-current $S$, then $\|\Omega\|_*\leq 2\|S\|$.
\end{lemma}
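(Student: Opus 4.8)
The plan is to produce an explicit decomposition of $\Omega$ into positive closed currents whose total mass is controlled by $\|S\|$, rather than to argue about the minimum in the definition of $\|\cdot\|_*$ abstractly. The starting observation is that a $\ddc$-exact current is in particular closed: writing $\Omega=\ddc U$ we have $d\Omega=0$. Hence $\Omega+S$ is closed, being a sum of two closed currents, and by hypothesis $\Omega\geq -S$, i.e.\ $\Omega+S\geq 0$. Therefore $\Omega+S$ is a \emph{positive closed} $(q,q)$-current, and the identity $\Omega=(\Omega+S)-S$ exhibits $\Omega$ as a difference of two positive closed currents. By the very definition of the $*$-norm this gives immediately
$$\|\Omega\|_*\leq \|\Omega+S\|+\|S\|.$$

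It remains to check that $\|\Omega+S\|=\|S\|$. Here I would invoke the remark preceding the lemma, namely that the mass of a positive closed current depends only on its cohomology class, together with the fact that $\Omega$, being $\ddc$-exact, has vanishing class in $H^{q,q}(X,\R)$. Concretely, $\|\Omega+S\|=\lp\Omega+S,\omega^{k-q}\rp=\lp S,\omega^{k-q}\rp+\lp\Omega,\omega^{k-q}\rp$, and the last term vanishes: writing $\Omega=\ddc U$ and integrating by parts, $\lp\ddc U,\omega^{k-q}\rp=\lp U,\ddc(\omega^{k-q})\rp=0$, since $\omega$ is K\"ahler and hence $\omega^{k-q}$ is $\ddc$-closed. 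Thus $\|\Omega+S\|=\|S\|$, and combining with the previous display yields $\|\Omega\|_*\leq 2\|S\|$, as claimed.

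The argument has essentially no analytic obstacle; the only point requiring care is the justification that $\Omega$ is genuinely closed and that its pairing with $\omega^{k-q}$ vanishes, i.e.\ that $\ddc$-exactness really forces the cohomology class (and in particular the mass contribution) to be zero. Once the decomposition $\Omega=(\Omega+S)-S$ is recognized as admissible, the estimate is automatic, so the main conceptual step is simply noticing that the positivity hypothesis $\Omega\geq -S$ together with closedness makes $\Omega+S$ a legitimate positive closed current to feed into the definition of $\|\cdot\|_*$.
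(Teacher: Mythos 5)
Your proof is correct and follows exactly the same route as the paper: the decomposition $\Omega=(\Omega+S)-S$ with $\Omega+S$ positive closed, and the observation that $\|\Omega+S\|=\|S\|$ because $\Omega$ is $\ddc$-exact. You merely spell out the integration-by-parts justification that the paper leaves implicit.
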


\begin{proof}
	Note that $\Omega+S$ is a positive closed current   and we can write $\Omega$ as \[ \Omega= (\Omega+S)-S.\] The mass of $\Omega+S$ is $\|S\|$ because $\Omega$ is $\ddc$-exact.
\end{proof}

We introduce the \textit{$*$-topology} on $\Dc_q$: for  a sequence of currents $S_n$ in $\Dc_q$, we say $S_n$ converge to a current $S$ in $\Dc_q$ if $S_n$ converge to $S$ in the sense of currents and if $\|S_n\|_*$ are uniformly bounded. Note that smooth forms are dense in $\Dc_q$ for this topology.

Let $\Dc_q^0$ be the subspace of $\Dc_q$ which contains all the currents of class $\{0\}$ in $H^{q,q}(X,\R)$. It is not hard to see $\Dc_q^0$ is closed under the above topology.\\

Now we define the super-potential of a current $S\in \Dc_q$. Fix a basis of $H^{q,q}(X,\R)$, denoted by $\{\alpha\}:=\big\{\{\alpha_1\},\dots,\{\alpha_t\}\big\}$. We can take all the $\alpha_j$ being smooth forms. For any $R\in \Dc_{k-q+1}^0$, there exists a real $(k-q,k-q)$-current $U_R$ such that $\ddc U_R=R$. We call $U_R$ a \textit{potential} of $R$. After adding some closed form to $U_R$ we can assume $\lp U_R,\alpha_j\rp=0$ for all $1\leq j\leq t$. After that we say  $U_R$ is \textit{$\alpha$-normalized}. If in addition,  $R$ is smooth, then we can choose $U_R$ smooth.

The \textit{$\alpha$-normalized super-potential} $\Uc_S$ of $S$ is a linear functional on the smooth forms in $\Dc_{k-q+1}^0$, and it is  defined by  $$\Uc_S(R):=\lp S,U_R\rp,$$ where $U_R$ is a smooth $\alpha$-normalized potential of $R$. Note that $\Uc_S(R)$ does not depend on the choice of $U_R$.

If $\Uc_S$ can be extended continuously to a linear functional on $\Dc_{k-q+1}^0$ for the $*$-topology we defined above, then we say $S$ has a \textit{continuous super-potential}. If $S\in \Dc_q^0$, then $\Uc_S$ does not depend on the choice of $\alpha$. If $S$ is smooth, then it has a continuous super-potential and we have  $\Uc_S(R)=\Uc_R(S)$, where $\Uc_R$ is the  super-potential of $R$. The equality still holds if we only assume $S$ has a continuous super-potential (see \cite{super-kahler}).\\

For $0<l<\infty$, we define the norm $\|\cdot\|_{\Cc^{-l}}$ and the distance $\dist_l$ on $\Dc_q$ by $$\|\Omega\|_{\Cc^{-l}}:=\sup_{\|\Phi\|_{\Cc^l}\leq 1} |\lp\Omega,\Phi\rp|\quad\text{and}\quad \dist_l(\Omega,\Omega'):=\|\Omega-\Omega'\|_{\Cc^{-l}},$$ where $\Phi$ is a smooth test $(k-q,k-q)$-form  on $X$. For $0<l<l'<\infty$, on any $\|\cdot\|_*$-bounded subset of $\Dc_p$, we have $$\dist_{l'}\leq \dist_l\leq c_{l,l'}(\dist)^{l/l'}$$ for some positive constant $c_{l,l'}$ (see\cite{super-kahler}).

For $S\in \Dc_q$ and  constants $l>0,0<\lambda\leq 1,M\geq 0$, a  super-potential $\Uc_S$ of $S$ is said to be \textit{$(l,\lambda,M)$-H\"older continuous} if it is continuous and it satisfies $$|\Uc_S(R)|\leq M\|R\|_{\Cc^{-l}}^\lambda$$ for all $R\in \Dc_{k-q+1}^0$ with $\|R\|_*\leq 1$. If $l'>0$ is another constant, the above identity for $\dist_l$ and $\dist_{l'}$ implies that $\Uc_S$ is also $(l',\lambda',M')$-H\"older continuous for some constants $\lambda'$ and $M'$ independent of $S$. And this definition does not depend on the normalization of the super-potential. We need the following two lemmas  which are originally stated in \cite{exp-kahler}.

\begin{lemma}\label{lem1}
Let $R\in\Dc_{k-p+1}^0$ with  $\|R\|_*\leq 1$ and $\Uc_R$ is $(2,\lambda,M)$-H\"older continuous. There is a constant $A>0$ independent of $R,\lambda$ and $M$ such that the  super-potential $\Uc_S$ of $S$ satisfies $$|\Uc_S(R)|\leq A(1+\lambda^{-1}\log^+M),$$ for any $S\in\Dc_p^0$ with $\|S\|_*\leq 1$, where $\log^+:=\max\{0,\log\}$.
\end{lemma}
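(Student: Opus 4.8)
The plan is to reduce the statement to an estimate for the functional $\Uc_R$ and then optimize a regularization scale against $M$. Since $\Uc_R$ is $(2,\lambda,M)$-H\"older continuous it is in particular continuous for the $*$-topology, hence extends to a linear functional on all of $\Dc_p^0$; invoking the duality $\Uc_S(R)=\Uc_R(S)$ recalled before the lemma (extended by this continuity), it suffices to bound $|\Uc_R(S)|$ for $S\in\Dc_p^0$ with $\|S\|_*\le 1$. First I would fix, for each $\theta\in(0,1]$, a smooth regularization $S_\theta\in\Dc_p^0$ of $S$ supplied by the regularization theory of super-potentials in \cite{super-kahler,dinh-sibony:acta}, with the three properties: $\|S_\theta\|_*\lesssim 1$; $\dist_2(S,S_\theta)=\|S-S_\theta\|_{\Cc^{-2}}\lesssim\theta$; and its normalized potential $V_\theta$ (a smooth form with $\ddc V_\theta=S_\theta$) satisfies $\|V_\theta\|_{\Cc^0}\lesssim 1+\log^+(1/\theta)$. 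These implied constants depend only on $(X,\omega)$. Then I split
\[ \Uc_R(S)=\Uc_R(S_\theta)+\Uc_R(S-S_\theta). \]

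For the smooth term, the very definition of the super-potential gives $\Uc_R(S_\theta)=\lp R,V_\theta\rp$; writing $R=R^+-R^-$ with $\|R^+\|+\|R^-\|\lesssim\|R\|_*\le 1$ and estimating the pairing by mass times sup-norm yields $|\Uc_R(S_\theta)|\lesssim\|V_\theta\|_{\Cc^0}\lesssim 1+\log^+(1/\theta)$, uniformly in $R$. The crucial point is that this bound uses only $\|R\|_*\le 1$ and never the constant $M$. For the remainder, $S-S_\theta\in\Dc_p^0$ with $\|S-S_\theta\|_*\lesssim 1$, so applying the $(2,\lambda,M)$-H\"older continuity to the $*$-normalization of $S-S_\theta$ and using linearity,
\[ |\Uc_R(S-S_\theta)|\le M\,\|S-S_\theta\|_*^{1-\lambda}\,\|S-S_\theta\|_{\Cc^{-2}}^{\lambda}\lesssim M\theta^{\lambda}, \]
where $\|S-S_\theta\|_*^{1-\lambda}\lesssim 1$ since $0<\lambda\le 1$. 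Combining the two, $|\Uc_R(S)|\le A_1\bigl(1+\log^+(1/\theta)\bigr)+A_2 M\theta^{\lambda}$ with $A_1,A_2$ universal.

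It remains to optimize in $\theta$. If $M\le e$ I take $\theta=1$, whence $|\Uc_R(S)|\lesssim 1$. If $M>e$ I take $\theta=M^{-1/\lambda}\in(0,1)$, so that $M\theta^{\lambda}=1$ and $\log^+(1/\theta)=\lambda^{-1}\log M$, giving $|\Uc_R(S)|\le A\bigl(1+\lambda^{-1}\log^+M\bigr)$ with $A=A_1+A_2$ independent of $R,\lambda,M$, as required.

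The main obstacle is the smooth-term estimate: one must produce a regularization $S_\theta$ of a current in $\Dc_p^0$ that simultaneously keeps the $*$-norm bounded, preserves the zero cohomology class (so that a potential $V_\theta$ exists at all), approximates $S$ at the linear rate $\theta$ in $\dist_2$, and—most delicately—admits a normalized potential whose $\Cc^0$-norm grows only \emph{logarithmically} in $1/\theta$. This logarithmic control is exactly what converts the H\"older constant $M$ into the factor $\log^+M$ after optimization, and it is the only step requiring genuine potential-theoretic input rather than the formal splitting and homogeneity arguments above; everything else is bookkeeping once this regularization is in hand.
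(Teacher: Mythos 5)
The paper itself offers no proof of this lemma: it is quoted from \cite{exp-kahler}, so I am judging your argument on its merits against the standard super-potential machinery. Your overall architecture (regularize $S$, apply the H\"older continuity of $\Uc_R$ only to the remainder $S-S_\theta$, optimize $\theta\sim M^{-1/\lambda}$) has the right shape, and the remainder estimate $|\Uc_R(S-S_\theta)|\lesssim M\theta^\lambda$ is correct. But the step you yourself identify as the crux --- a regularization $S_\theta$ of an arbitrary $S\in\Dc_p^0$ with $\|S\|_*\le 1$ admitting a normalized potential $V_\theta$ with $\|V_\theta\|_{\Cc^0}\lesssim 1+\log^+(1/\theta)$ --- fails for $p\ge 2$, and no such statement appears in \cite{super-kahler} or \cite{dinh-sibony:acta}. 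Quasi-potentials of positive closed $(p,p)$-currents blow up polynomially, not logarithmically: if $S^+=[V]$ for a subvariety $V$ of codimension $p$, the Green quasi-potential behaves like $\dist(\cdot,V)^{2-2p}$ near $V$, so the potential of a $\theta$-smoothing has size about $\theta^{2-2p}$ along $V$. This is not cured by a cleverer choice of potential: since $R$ is closed and $\ddc$-exact, the pairing $\lp R,V_\theta\rp=\Uc_R(S_\theta)$ does not depend on which normalized potential is used, and taking $R$ to be (a normalization of) the integration current on a $(p-1)$-dimensional subvariety contained in $V$ gives $|\Uc_R(S_\theta)|\gtrsim\theta^{2-2p}$ while $\|R\|_*\lesssim 1$. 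Exactly this configuration arises in the paper's application, where the lemma is used on $X\times X$ with $p$ replaced by $k$ and the relevant currents built from $[\Delta]$.

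The structural reason the argument cannot close is that your bound on the main term, $|\Uc_R(S_\theta)|\le\|R\|_*\,\|V_\theta\|_{\Cc^0}$, uses nothing about $R$ beyond $\|R\|_*\le 1$; the example above shows that under that hypothesis alone the main term can genuinely be a power of $1/\theta$, and the optimization then returns a power of $M$ instead of $\log^+ M$ --- which would destroy the intended application, where $M$ grows exponentially in $n$. The H\"older continuity of $\Uc_R$ must therefore enter the estimate of the main term as well, not only the remainder. (For $p=1$ your claim is essentially correct, but even there it rests on the exponential estimates for d.s.h.\ functions rather than on generic regularization theory; the mechanism --- a functional that is both H\"older continuous and large at one point must be large on a set of definite size, which the exponential estimates forbid --- is what has to be generalized to higher bidegree via super-potentials, as in \cite{exp-kahler}.) As written, the proof has a genuine gap at its acknowledged key step.
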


\begin{lemma}\label{lem2}
	Let $f,p$ be as in Theorem \ref{main} and let $R\in\Dc_{k-p+1}^0$ whose super-potential $\Uc_R$ is $(2,\lambda,M)$-H\"older continuous. Then there is a constant $A_0\geq 1$ independent of $R,\lambda,M$ such that the super-potential $\Uc_{f_*(R)}$ of $f_*(R)$ is $(2,\lambda,A_0M)$-H\"older continuous.
\end{lemma}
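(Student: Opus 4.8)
The plan is to reduce the whole statement to a single intertwining identity
$$\Uc_{f_*(R)}(S)=\Uc_R(f^*(S))\qquad\text{for all smooth }S\in\Dc_p^0,$$
and then to control the two relevant norms of $f^*(S)$ by the corresponding norms of $S$. Once this identity is available, the $(2,\lambda,A_0M)$-Hölder estimate for $\Uc_{f_*(R)}$ transfers directly from the $(2,\lambda,M)$-estimate for $\Uc_R$.

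To establish the identity, I first note that, as $f$ is an automorphism, $f_*$ sends positive closed currents to positive closed currents and preserves the zero class, so $f_*(R)\in\Dc_{k-p+1}^0$ and the super-potential $\Uc_{f_*(R)}$ is well defined on smooth forms in $\Dc_p^0$. Let $U_S$ be a smooth $\alpha$-normalized potential of $S$. By the definition of the super-potential and the duality between $f_*$ and $f^*$,
$$\Uc_{f_*(R)}(S)=\lp f_*(R),U_S\rp=\lp R,f^*(U_S)\rp.$$
Since $\ddc f^*(U_S)=f^*(\ddc U_S)=f^*(S)$, the current $f^*(U_S)$ is a smooth potential of $f^*(S)$, and $f^*(S)\in\Dc_p^0$ because $f^*$ again preserves positivity, closedness and the zero class. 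If $\tilde U$ is the smooth $\alpha$-normalized potential of $f^*(S)$, then $f^*(U_S)-\tilde U$ is $\ddc$-closed; as $R$ lies in the zero class it is $\ddc$-exact, hence annihilates every $\ddc$-closed form, so $\lp R,f^*(U_S)-\tilde U\rp=0$. Therefore $\lp R,f^*(U_S)\rp=\lp R,\tilde U\rp=\Uc_R(f^*(S))$, which is the claimed identity.

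It remains to bound the two norms of $f^*(S)$. The norm $\|\cdot\|_*$ depends only on cohomology classes, on which $f^*$ acts as a fixed linear operator; hence there is a constant $C_1\geq1$, independent of $S$, with $\|f^*(S)\|_*\leq C_1\|S\|_*$. For the $\Cc^{-2}$ norm, writing out the dual pairing gives $\|f^*(S)\|_{\Cc^{-2}}=\sup_{\|\Phi\|_{\Cc^2}\leq1}|\lp S,f_*(\Phi)\rp|$; since $f^{-1}$ is a $\Cc^\infty$ diffeomorphism of the compact manifold $X$, pushing a $\Cc^2$ form forward obeys $\|f_*(\Phi)\|_{\Cc^2}\leq C_2\|\Phi\|_{\Cc^2}$ for a constant $C_2>0$ depending only on $f$, whence $\|f^*(S)\|_{\Cc^{-2}}\leq C_2\|S\|_{\Cc^{-2}}$. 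Assuming $\|S\|_*\leq1$ we get $\|f^*(S)\|_*\leq C_1$, so applying the Hölder estimate for $\Uc_R$ to the normalized current $f^*(S)/C_1$ and using linearity gives
$$|\Uc_{f_*(R)}(S)|=|\Uc_R(f^*(S))|\leq C_1^{1-\lambda}M\,\|f^*(S)\|_{\Cc^{-2}}^\lambda\leq C_1^{1-\lambda}C_2^\lambda\,M\,\|S\|_{\Cc^{-2}}^\lambda.$$
Because $0<\lambda\leq1$, the prefactor is bounded by $A_0:=C_1\max(1,C_2)$, independent of $R,\lambda,M$; this is exactly the statement that $\Uc_{f_*(R)}$ is $(2,\lambda,A_0M)$-Hölder continuous, continuity being automatic from the estimate.

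The main obstacle is conceptual rather than computational: it is the intertwining identity, where one must verify that the normalization ambiguity in the potentials truly disappears. This rests on the fact that a current in the zero class is $\ddc$-exact and therefore pairs to zero against the $\ddc$-closed difference of two potentials. By contrast, the two operator-norm bounds are routine consequences of $f$ being a fixed biholomorphism; the only point needing minor care is that pushing a $\Cc^2$ form forward by the smooth map $f^{-1}$ remains uniformly $\Cc^2$-bounded, which uses the smoothness of $f$ through its third derivatives.
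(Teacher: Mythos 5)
The paper does not prove this lemma; it only cites \cite{exp-kahler}, where the statement originates. Your argument is correct and is essentially the standard proof from that reference: the intertwining identity $\Uc_{f_*(R)}(S)=\Uc_R(f^*(S))$ (valid because $R$ is $\ddc$-exact, so the normalization ambiguity in the potentials pairs to zero), combined with the bounds $\|f^*(S)\|_*\leq C_1\|S\|_*$ from the action of $f^*$ on cohomology and $\|f^*(S)\|_{\Cc^{-2}}\leq C_2\|S\|_{\Cc^{-2}}$ from the $\Cc^2$-boundedness of $(f^{-1})^*$ on test forms, with the exponent bookkeeping $C_1^{1-\lambda}C_2^{\lambda}\leq C_1\max(1,C_2)$ handled correctly.
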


We will use the above two lemmas to show the following result. A simple case was shown in   \cite[Proposition 3.1]{exp-kahler}, which is crucial in the proof of exponential mixing theorem for $\Cc^{\alpha}$ observables for $0<\alpha\leq 2$.  Since $T_+$ is the unique positive current in $\{T_+\}$, if $S\in \Dc_p$, then $d_p^{-n}(f^n)^*(S)$ converge to a multiple of $T_+$. 

\begin{proposition}\label{mainlemma}
	Let $f,d_p,\delta$ be as in Theorem \ref{main} and $S\in \Dc_p$. Let $r$ be the constant such that $d_p(f^n)^*(S)$ converge to $rT_+$. Let $\{R_{\ep}\}_{0<\ep\leq 1/2}$ be a family of currents in $\Dc_{k-p+1}^0$ with $\|R_\ep\|_*\leq 1$ whose super-potentials $\Uc_{R_\ep}$ are $(2,\lambda,\ep^{-2})$-H\"older continuous. Let $\Uc_n$ and $\Uc_+$ be the $\alpha$-normalized super-potential of  $d_p^{-n}(f^n)^*(S)$ and $T_+$ respectively. Then there exists a constant $A>0$ independent of the family $\{R_{\ep}\}$  such that $$|\Uc_n(R_\ep)-r\Uc_+(R_\ep)|\leq -A\log\ep(d_p/\delta)^{-n}$$ for all $n$ and $\ep$.
\end{proposition}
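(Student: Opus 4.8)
\emph{The plan is to reduce the statement to a single super-potential evaluation and to extract the decay from the spectral gap between $d_p$ and $\delta$, while the factor $-\log\ep$ will be produced by Lemma \ref{lem1}.}

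First I would exploit linearity of the $\alpha$-normalized super-potential. Writing $\Omega_0:=S-rT_+$ and $\Omega_n:=d_p^{-n}(f^n)^*(S)-rT_+$, the relation $f^*T_+=d_pT_+$ gives $\Omega_n=d_p^{-n}(f^n)^*\Omega_0$ and $\Uc_n(R_\ep)-r\Uc_+(R_\ep)=\Uc_{\Omega_n}(R_\ep)$. By hypothesis the class $\{\Omega_0\}=\{S\}-r\{T_+\}$ lies in the $f^*$-invariant complement $W$ of the top eigenline $\R\{T_+\}$ inside $H^{p,p}(X,\R)$, on which $f^*$ has spectral radius $<\delta$. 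I would fix the normalizing basis $\{\alpha\}$ adapted to this splitting: a smooth closed $\alpha_1\in\{T_+\}$ together with smooth closed forms $\alpha_2,\dots,\alpha_t$ spanning $W$, and let $\{\gamma_j\}$ be the dual basis in $H^{k-p,k-p}(X,\R)$, so that $\gamma_1$ annihilates $W$.

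Next I would transfer the dynamics onto the test current by adjunction, $\Uc_{\Omega_n}(R_\ep)=d_p^{-n}\langle\Omega_0,(f^n)_*U_{R_\ep}\rangle$, where $U_{R_\ep}$ is the smooth $\alpha$-normalized potential of $R_\ep$. Since $(f^n)_*U_{R_\ep}$ is a potential of $(f^n)_*R_\ep\in\Dc_{k-p+1}^0$, it differs from an $\alpha$-normalized potential by $\sum_j\langle U_{R_\ep},(f^n)^*\alpha_j\rangle\gamma_j$. The normalization $\langle U_{R_\ep},\alpha_j\rangle=0$ kills the cohomological part of each $\langle U_{R_\ep},(f^n)^*\alpha_j\rangle$, leaving only $\langle R_\ep,\beta_{j,n}\rangle$, where $\ddc\beta_{j,n}=(f^n)^*\alpha_j-\sum_i c^{(n)}_{ij}\alpha_i$ is $\ddc$-exact. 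The term $j=1$ is annihilated because $\langle\Omega_0,\gamma_1\rangle=\langle\{\Omega_0\},\{\gamma_1\}\rangle=0$, as $\{\Omega_0\}\in W$; for $j\geq2$ one has $\|(f^n)^*\alpha_j\|\lesssim\delta^n$, so after multiplication by $d_p^{-n}$ these terms contribute at most $(d_p/\delta)^{-n}$ up to a factor $\log(1/\ep)$ from pairing $\beta_{j,n}$ against $R_\ep$. Thus $\Uc_{\Omega_n}(R_\ep)=d_p^{-n}\Uc_{\Omega_0}\big((f^n)_*R_\ep\big)$ modulo an error of the desired size.

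For the main term I would use the two quoted lemmas. The bound $\|(f^n)_*R_\ep\|_*\lesssim d_{p-1}^n$ follows by pushing forward a minimal decomposition $R_\ep=R_\ep^+-R_\ep^-$ and using that $f_*$ has spectral radius $d_{p-1}<\delta$ on $H^{k-p+1,k-p+1}(X,\R)$, while iterating Lemma \ref{lem2} shows that the super-potential of $(f^n)_*R_\ep$ is $(2,\lambda,A_0^n\ep^{-2})$-H\"older continuous. Splitting the fixed current $\Omega_0=\Omega_0^0+\theta_0$ into a class-zero part $\Omega_0^0\in\Dc_p^0$ and a fixed smooth form $\theta_0$, I would apply Lemma \ref{lem1} to $\Omega_0^0$ and $(f^n)_*R_\ep$ after rescaling both to unit $\|\cdot\|_*$-mass, obtaining a bound $\lesssim d_p^{-n}d_{p-1}^n\,\lambda^{-1}\log^+(A_0^n\ep^{-2}\rho_n^{-1})$ with $\rho_n=\|(f^n)_*R_\ep\|_*$, whereas the smooth part is controlled by the $\|\cdot\|_*$-continuity of $\Uc_{\theta_0}$, giving $\lesssim d_p^{-n}d_{p-1}^n$. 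Since $d_{p-1}<\delta<d_p$, the exponential slack $(d_{p-1}/\delta)^n$ absorbs the polynomial-in-$n$ factors and the $A_0^n$ inside the logarithm, so every contribution is $\lesssim(d_p/\delta)^{-n}\big(1+\log(1/\ep)\big)\lesssim-\log\ep\,(d_p/\delta)^{-n}$, as required.

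The hard part will be the error analysis of the third paragraph: controlling the exact discrepancies $\beta_{j,n}$ and, crucially, bounding their pairing $\langle R_\ep,\beta_{j,n}\rangle$ against the only mildly regular currents $R_\ep$ with a loss no worse than $\log(1/\ep)$ and with genuine decay $\delta^n$. This requires choosing the potentials $\beta_{j,n}$ uniformly, using the regularity of the Green current $T_+$ and of the super-potentials, and exploiting that the gap $\max\{d_{p-1},d_{p+1}\}<\delta$ is strict so that all subexponential and $A_0^n$ factors are harmless. The uniformity of $A$ in the family $\{R_\ep\}$ then follows since $\lambda$ is common to the family and all remaining constants depend only on $f$, $X$ and the fixed data $\Omega_0$.
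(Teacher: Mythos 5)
Your fourth paragraph is essentially the paper's whole proof: push $R_\ep$ forward, bound $\|(f^n)_*R_\ep\|_*$ by (a constant times) $\delta_0^n$ using Poincar\'e duality $d_{k-p+1}(f^{-1})=d_{p-1}$, iterate Lemma \ref{lem2} to get a $(2,\lambda,A_0^n\ep^{-2})$-H\"older super-potential, apply Lemma \ref{lem1}, and let the strict gap $\delta_0<\delta$ absorb the $n\log A_0$ coming out of the logarithm. That part is correct. The genuine gap is your third paragraph, and it is created by the order in which you do the reduction. The paper's first move is to subtract a \emph{smooth closed} form representing $\{S\}$ (that piece is covered by the already-known estimate for smooth closed currents) so that one may assume $S\in\Dc_p^0$ and $r=0$ from the start. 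Once $\{S\}=0$, the pairing $\lp S,\cdot\rp$ kills every closed form added to a potential, so $\Uc_n(R_\ep)=d_p^{-n}\lp S,(f^n)_*U_{R_\ep}\rp=d_p^{-n}\Uc_S\big((f^n)_*R_\ep\big)$ holds \emph{exactly}, with no normalization discrepancy and no $\beta_{j,n}$ terms. You instead subtract $rT_+$, which leaves $\{\Omega_0\}$ nonzero in $W$, and only split off the smooth representative at the very end; this forces you to control $d_p^{-n}\sum_{j\ge 2}\lp R_\ep,\beta_{j,n}\rp\lp\Omega_0,\gamma_j\rp$.

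That error term does not decay the way you claim. The bound $\|(f^n)^*\alpha_j\|\lesssim\delta^n$ for $j\ge 2$ is a statement about the cohomology class $(f^n)^*\{\alpha_j\}$; it does not control the $*$-norm of the current $(f^n)^*\alpha_j$, which (via the decomposition $(f^n)^*(\alpha_j+C\omega^p)-C(f^n)^*\omega^p$) is only $O(d_p^n)$, and generically of that order since the pullback oscillates with amplitude comparable to $(f^n)^*\omega^p$. Hence $\ddc\beta_{j,n}$ has $*$-norm of order $d_p^n$, and the best Lemma \ref{lem1} gives after normalizing is $|\lp R_\ep,\beta_{j,n}\rp|\lesssim -d_p^n\log\ep$; multiplied by $d_p^{-n}$ and by the fixed nonzero constants $\lp\Omega_0,\gamma_j\rp$, this yields $-\log\ep$ with no exponential decay at all. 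So the step you yourself flag as ``the hard part'' fails as justified. The repair is not a finer analysis of the $\beta_{j,n}$ but the paper's reordering: perform the reduction to $\Dc_p^0$ first, after which the entire third paragraph becomes unnecessary.
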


\begin{proof}
	It was shown in \cite[Section 3]{exp-kahler} and \cite[Section 4]{super-kahler} that for $S\in \Dc_p$ smooth and closed, we have $|\Uc_n(R)-r\Uc_n(R)|\lesssim (d_p/\delta)^{-n}\|R\|_*$ for all $R\in \Dc_{k-p+1}^0$. So we can subtract a smooth closed $(p,p)$-form from $S$ and assume that $S\in \Dc_p^0$ and $r=0$.

	Fix a constant $\delta_0$ such that $\max\{d_{p-1},d_{p+1}\}<\delta_0<\delta$ and $\delta_0$ satisfies the same properties of $\delta$ as in Theorem \ref{main}. By Poincar\'e duality, the dynamic degree $d_{p-1}$ of $f$ is equal to the dynamic degree $d_{k-p+1}(f^{-1})$ of $f^{-1}$. Since the mass of a positive current can be computed cohomologically, we have $\|(f^n)_*(R_\ep)\|_*\lesssim \delta_0^n\|R_\ep\|_*$. 
	
	Define $R_{n,\ep}:=c^{-1}\delta_0^{-n}(f^n)_*(R_\ep)$ where $c\geq 1$ is a fixed constant large enough such that $\|R_{n,\ep}\|_*\leq 1$ for all $n$ and $\ep$. By Lemma \ref{lem2}, the super-potential of $R_{n,\ep}$, denoted by $\Uc_{R_{n,\ep}}$, is $(2,\lambda, A_0^n\ep^{-2})$-H\"older continuous for some $A_0\geq 1$. On the other hand, since $S\in \Dc_p^0$, by  definition  we have
	$$\Uc_n(R_\ep)=d_p^{-n}\Uc_S\big((f^n)_*(R_\ep)\big)=c(d_p/\delta_0)^{-n}\Uc_S(R_{n,\ep}).$$ 
	Finally, applying Lemma \ref{lem1}, we obtain
	$$|\Uc_n(R_\ep)|=c(d_p/\delta_0)^{-n}|\Uc_S(R_{n,\ep})|\lesssim(d_p/\delta_0)^{-n}\big(1+\lambda^{-1}\log^+(A_0^n\ep^{-2})\big)\lesssim -\log\ep(d_p/\delta)^{-n}.$$ This finishes the proof.
\end{proof}

\vspace{0.5cm}

\section{Exponentially mixing of $\mu$}

From now on, let $f,d_p$ and $\delta$ be as in Theorem \ref{main}, and let $S$ be a fixed  positive closed $(p,p)$-current of mass $1$ on $X$. Define a sequence of currents $S_n$ by $S_n:=d_p^{-n}(f^n)^*(S)$.  We know that $S_n$ converge to $T_+$. Fix a basis $\{\alpha\}:=\big\{\{\alpha_1\},\dots,\{\alpha_t\}\big\}$ of $H^{p,p}(X,\R)$. Denote by $\Uc_n$ and $\Uc_+$    be the $\alpha$-normalized super-potentials of  $S_n$ and $T_+$  respectively.

For any bounded quasi-p.s.h.\ function $\phi$ on $X$ such that $\ddc \phi\geq -\omega,|\phi|\leq 1$.  We consider the same  regularization of $\phi$ as in \cite[Theorem 2.1]{dinh-fekete} (when $X=\P^k$, see also \cite[Section 2]{dinh-sibony:acta}), which is using a standard convolution and a partition of
unity to regularize the function locally, then gluing them globally by using maximal regularization function \cite[I.5]{demailly:agbook}. So there exists a family of smooth functions $\phi_\ep,0<\ep\leq 1/2$ such that $\ddc\phi_\ep\geq -\omega$, and  $\phi_\ep$ decreases to $\phi_0:=\phi$ when $\ep$ decreases to $0$. And $\phi_\ep$ satisfies the following two estimates:
\begin{equation} \label{regul}
\|\phi_\ep-\phi\|_{L^1(\omega^k)}\lesssim \ep \quad\text{and}\quad \|\phi_\ep\|_{\Cc^2}\lesssim \ep^{-2},
\end{equation} where the $\lesssim$'s are independent of $\phi$.

We define a sequence of functions $h_n$ and $h$ on $(0,1/2]$ by 
$$h_n(\ep)=\Uc_n(\ddc\phi_\ep\wedge T_-)\quad\text{and}\quad h(\ep)=\Uc_+(\ddc\phi_\ep \wedge T_-).$$
By definition, $$h_n(\ep)=\lp S_n\wedge T_-,\phi_\ep\rp-\lp S_n, K_\ep\rp\quad \text{and} \quad h(\ep)=\lp T_+\wedge T_-,\phi_\ep\rp-\lp T_+, K_\ep\rp,$$ where $K_\ep$ is a smooth closed $(k-p,k-p)$-form depends on $\ep$ such that $\phi_\ep T_--K_\ep$ is the $\alpha$-normalized potential of $\ddc\phi_\ep\wedge T_-$, i.e.\  $\lp\phi_\ep T_--K_\ep,\alpha_j\rp=0$ for all $j$. Observe that  $h_n$ converge pointwise to $h$ on $(0,1/2]$. 

On the other hand, note that   $\{\omega^k\}$ is a basis of $H^{k,k}(X,\R)$. We consider the $\{\omega^k\}$-normalized super potential of $\mu=T_+\wedge T_-$ and define the  function $$g(\ep):=\Uc_\mu(\ddc \phi_\ep)=\lp T_+\wedge T_-, \phi_\ep\rp -\lp  \omega^k,\phi_\ep\rp.$$ 
The function $g$ is well defined at $\ep=0$ because $T_+\wedge T_-$ has   a H\"older continuous super-potential (see \cite{super-kahler}). We prove two  lemmas first.

\begin{lemma}\label{3.0}
	There exists a constant $c>0$ independent of $\phi$ such that $$|\lp S_n, K_\ep\rp-\lp T_+, K_\ep\rp|\leq c (d_p/\delta)^{-n}$$ for $\ep\in (0,1/2]$.
\end{lemma}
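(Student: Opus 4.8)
The plan is to exploit that $K_\ep$ is a \emph{closed} form, so that the two pairings depend only on the cohomology classes of $S_n$ and $T_+$. Writing $\{\cdot\}$ for the class in Hodge cohomology, one has
$$\lp S_n,K_\ep\rp-\lp T_+,K_\ep\rp=\big(\{S_n\}-\{T_+\}\big)\smile\{K_\ep\},$$
where $\smile$ is the cup product $H^{p,p}(X,\R)\times H^{k-p,k-p}(X,\R)\to H^{k,k}(X,\R)\cong\R$. Thus the estimate splits into two independent tasks: (i) show that $\{S_n\}-\{T_+\}$ decays like $(d_p/\delta)^{-n}$ in a fixed norm on $H^{p,p}(X,\R)$, and (ii) show that $\{K_\ep\}$ stays in a bounded subset of $H^{k-p,k-p}(X,\R)$, with a bound independent of both $\ep$ and $\phi$. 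Combining (i) and (ii) via the boundedness of the cup-product pairing on the finite-dimensional cohomology then yields the claim.

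For (i) I use the spectral hypothesis on $f^*$. Since $d_p$ is the unique eigenvalue of maximal modulus and is attained only on the line $\langle\{T_+\}\rangle$, I decompose $H^{p,p}(X,\R)=\langle\{T_+\}\rangle\oplus V$ into $f^*$-invariant subspaces, where the spectral radius of $f^*|_V$ is strictly smaller than $\delta$. Because $S_n\to T_+$, the projection of $\{S\}$ onto $\langle\{T_+\}\rangle$ equals $\{T_+\}$, so $\{S\}-\{T_+\}\in V$. Using $f^*\{T_+\}=d_p\{T_+\}$ and $\{S_n\}=d_p^{-n}(f^n)^*\{S\}$ gives $\{S_n\}-\{T_+\}=d_p^{-n}(f^n)^*\big(\{S\}-\{T_+\}\big)$, and since this vector lies in $V$ its norm is $\lesssim d_p^{-n}\|(f^n)^*|_V\|\lesssim(d_p/\delta)^{-n}$; the strict inequality on the spectral radius absorbs the possible polynomial factor coming from nondiagonalizable Jordan blocks.

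For (ii), which I expect to be the main point, I use the $\alpha$-normalization defining $K_\ep$. Since $\phi_\ep T_--K_\ep$ is $\alpha$-normalized, $\lp K_\ep,\alpha_j\rp=\lp \phi_\ep T_-,\alpha_j\rp=\lp T_-,\phi_\ep\alpha_j\rp$ for each $j$, and since $\{\alpha_1\},\dots,\{\alpha_t\}$ is a basis of $H^{p,p}(X,\R)$, these $t$ numbers determine $\{K_\ep\}$. Each $\alpha_j$ being a fixed smooth form dominated by a multiple of $\omega^p$, one gets $|\lp T_-,\phi_\ep\alpha_j\rp|\lesssim\|\phi_\ep\|_{L^\infty}\,\|T_-\|$, where $\|T_-\|$ is a fixed finite mass. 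The crucial uniform estimate is therefore $\|\phi_\ep\|_{L^\infty}\le C$ with $C$ independent of $\ep$ and $\phi$, and this is exactly where the hypothesis $|\phi|\le 1$, the monotonicity, and \eqref{regul} combine: since $\phi_\ep$ decreases to $\phi$, for $\ep\le 1/2$ one has $-1\le\phi\le\phi_\ep\le\phi_{1/2}$, while $\|\phi_{1/2}\|_{\Cc^2}\lesssim 1$ provides a universal bound on $\|\phi_{1/2}\|_{L^\infty}$. Hence $|\lp K_\ep,\alpha_j\rp|\le C'$ for all $j$, all $\ep$ and all admissible $\phi$, so $\{K_\ep\}$ is uniformly bounded, which completes (ii) and thereby the proof.
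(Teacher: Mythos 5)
Your proof is correct and follows essentially the same route as the paper: reduce to cohomology using that $K_\ep$ is closed, get the $(d_p/\delta)^{-n}$ decay of $\{S_n\}-\{T_+\}$ from the spectral gap (Perron--Frobenius), and bound $\lp K_\ep,\alpha_j\rp=\lp \phi_\ep T_-,\alpha_j\rp$ uniformly via the $\alpha$-normalization. Your explicit justification of the uniform bound $\|\phi_\ep\|_{L^\infty}\lesssim 1$ (via monotonicity and the $\Cc^2$ estimate at $\ep=1/2$) fills in a step the paper only asserts.
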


\begin{proof}
	Let $(a_{n,1},a_{n,2},\dots,a_{n,t})$ be the vector which represents the class $\{S_n\}$ in $H^{p,p}(X,\R)$ with respect to the basis $\{\alpha\}$, i.e. $\{S_n\}=\sum_{j=1}^ta_{n,j}\{\alpha_j\}$. Let $(b_1,b_2,\dots,b_t)$ be the vector which represents $\{T_+\}$. Since $K_\ep$ is closed, we have $$\lp S_n-T_+, K_\ep\rp= \sum_{j=1}^t\big\lp (a_{n,j}-b_j)\alpha_j, K_\ep\big\rp.$$  Combining with $\lp\phi_\ep T_--K_\ep,\alpha_j\rp=0$ for all $j$, we get $$\lp S_n-T_+, K_\ep\rp=\sum_{j=1}^t (a_{n,j}-b_j)\lp\alpha_j, \phi_\ep T_-\rp.$$
	
	On the other hand, by Perron-Frobenius theorem,  $\|\{S_n\}-\{T_+\}\|\lesssim  (d_p/\delta)^{-n}$ in the finite dimensional vector space $H^{p,p}(X,\R)$ (see also \cite[Section 3]{exp-kahler}). Therefore, we have $$\|(a_{n,1}-b_1,a_{n,2}-b_2,\dots,a_{n,t}-b_t)\|\lesssim  (d_p/\delta)^{-n}.$$ 
	
	Finally, observe that $\lp\alpha_j, \phi_\ep T_-\rp$ is uniformly bounded independent of $\phi$. Hence $$ |\lp S_n-T_+,  K_\ep\rp|\lesssim  (d_p/\delta)^{-n}.$$ The proof of this lemma is finished.
\end{proof}

\begin{lemma}\label{3.1}
	The function $g$ is H\"older continuous at $0$, more precisely, there exists a constant $c>0$ independent of $\phi$ such that for  $\ep\in (0,1/2]$, we have $|g(\ep)-g(0)|\leq c \ep^\alpha$ for some $0<\alpha\leq 1$.
\end{lemma}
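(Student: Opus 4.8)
The plan is to show that $g(\ep)=\lp T_+\wedge T_-,\phi_\ep\rp-\lp\omega^k,\phi_\ep\rp$ is H\"older continuous at $\ep=0$ by estimating the difference $g(\ep)-g(0)$ in terms of the regularization scale $\ep$. Since $\phi_\ep$ decreases to $\phi$ and the second term $\lp\omega^k,\phi_\ep\rp$ is controlled by the $L^1$-estimate $\|\phi_\ep-\phi\|_{L^1(\omega^k)}\lesssim\ep$ from \eqref{regul}, the essential task is to estimate $\lp T_+\wedge T_-,\phi_\ep-\phi\rp=\lp\mu,\phi_\ep-\phi\rp$. The key point is that this pairing is exactly the kind of quantity controlled by the H\"older continuity of the super-potential of $\mu=T_+\wedge T_-$.

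First I would use the definition of $g$ together with the super-potential language: since $T_+\wedge T_-$ has a H\"older continuous super-potential (as recalled from \cite{super-kahler} just before the statement), the functional $\Uc_\mu$ is $(l,\lambda,M)$-H\"older continuous for suitable constants. Writing $g(\ep)-g(0)=\Uc_\mu(\ddc\phi_\ep)-\Uc_\mu(\ddc\phi)=\Uc_\mu(\ddc(\phi_\ep-\phi))$, I would bound this by $M\|\ddc(\phi_\ep-\phi)\|_{\Cc^{-l}}^\lambda$, using that $\ddc(\phi_\ep-\phi)\in\Dc_1^0$ with $\|\cdot\|_*$ uniformly bounded (this boundedness follows from $\ddc\phi_\ep\geq-\omega$ and $\ddc\phi\geq-\omega$ via Lemma \ref{2.1}, since both differ from a multiple of $\omega$ by a positive closed current). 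The factor $|\phi|\leq 1$ and the normalization keep the masses uniformly bounded independently of $\phi$, which is what gives the constant $c$ independent of $\phi$.

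Next I would convert the $\Cc^{-l}$-distance into the regularization scale. For a low regularity index, say $l=0$ interpreted through the $L^1$-estimate, or more precisely using $\dist_l\leq c_{l,l'}(\dist)^{l/l'}$ from the interpolation inequality for distances recalled in section \ref{section2}, I would estimate $\|\ddc(\phi_\ep-\phi)\|_{\Cc^{-l}}$. The natural route is: the current $\ddc(\phi_\ep-\phi)$ paired against a test form of bounded $\Cc^l$-norm can be integrated by parts to land on $\phi_\ep-\phi$ itself, and then the $L^1$-control $\|\phi_\ep-\phi\|_{L^1(\omega^k)}\lesssim\ep$ enters. Combining the $*$-bound with the interpolation inequality converts this linear-in-$\ep$ estimate into a H\"older estimate $|g(\ep)-g(0)|\lesssim\ep^{\alpha}$ with exponent $\alpha=\lambda l/l'\in(0,1]$ determined by the H\"older exponent of the super-potential and the interpolation exponents.

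The main obstacle I expect is tracking the normalization and the uniformity of constants in $\phi$. The super-potential $\Uc_\mu$ is defined up to the $\{\omega^k\}$-normalization, and one must verify that the pairing $\Uc_\mu(\ddc(\phi_\ep-\phi))$ is genuinely computed by the potential $\phi_\ep-\phi$ (which is the natural potential of $\ddc(\phi_\ep-\phi)$) rather than some other normalized choice; matching the normalization correctly is where the $\lp\omega^k,\phi_\ep\rp$ subtraction in the definition of $g$ plays its role. The delicate part is ensuring that the $\ep$-independent constant in the H\"older continuity of $\Uc_\mu$ and the $\phi$-independent constants in \eqref{regul} combine into a single constant $c$ that is uniform over all $\phi$ with $\ddc\phi\geq-\omega$ and $|\phi|\leq 1$; this requires the H\"older continuity of the super-potential of $\mu$ to be applied on the fixed $\|\cdot\|_*$-bounded set of normalized currents, which it is, so the interpolation bound then closes the argument.
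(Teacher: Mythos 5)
Your proposal follows essentially the same route as the paper: bound $|g(\ep)-g(0)|$ by the H\"older continuity of the super-potential of $\mu=T_+\wedge T_-$ applied to $\ddc(\phi_\ep-\phi)$, then integrate by parts against a $\Cc^2$-bounded test form and invoke $\|\phi_\ep-\phi\|_{L^1(\omega^k)}\lesssim\ep$ together with $\phi_\ep\geq\phi$ to get $\dist_2(\ddc\phi_\ep,\ddc\phi)\lesssim\ep$. The interpolation detour you mention is unnecessary (the paper works directly with $\dist_2$, the index at which the super-potential's H\"older continuity is stated), but the argument is correct and matches the paper's.
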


\begin{proof}
  Since $T_+\wedge T_-$ has a H\"older continuous super-potential, by definition,  we have $$|g(\ep)-g(0)|\leq M\dist_2(\ddc \phi_{\ep},\ddc\phi)^\alpha$$ for some constants $0<\alpha\leq 1,M>0$.
  
    Since $\phi_\ep$ is decreasing when $\ep$ decreases, by definition and estimates \eqref{regul} we obtain 	\begingroup
    \allowdisplaybreaks
\begin{align*}
	\dist_2(\ddc\phi_\ep,\ddc \phi)&=\sup_{\|\Phi\|_{\Cc^2}\leq 1}|\lp \ddc\phi_\ep-\ddc\phi,\Phi\rp|=\sup_{\|\Phi\|_{\Cc^2}\leq 1}|\lp \phi_\ep-\phi,\ddc\Phi\rp| \\
	&\lesssim \lp \phi_\ep-\phi,\omega^k\rp= \|\phi_\ep-\phi\|_{L^1( \omega^k)}\lesssim \ep
\end{align*} \endgroup since $\|\Phi\|_{\Cc^2}\leq 1$ implies $\pm\ddc \Phi \leq c' \omega^k$, where $c'$ is a positive constant  only depending on   $(X,\omega)$.
 Therefore,
$$|g(\ep)-g(0)|\leq M\dist_2(\ddc \phi_{\ep},\ddc\phi)^\alpha\lesssim \ep^\alpha.$$ The proof of this lemma is complete.
\end{proof}

Since $\phi_\ep$ is smooth for every $\ep\neq 0$, in particular it is H\"older continuous. We can easily obtain the estimates of $h_n(\ep)-h(\ep)$ for $\ep\neq 0$ by using Proposition \ref{mainlemma}. Combining with the above two lemmas  we get the following key proposition.

\begin{proposition}\label{mainprop}
	Let $S_n$ and $\phi$ be as above. There exists a constant $c>0$ independent of $\phi$ such that 
$$\lp S_n\wedge T_-,\phi\rp-\lp T_+\wedge T_-,\phi\rp\leq c (d_p/\delta)^{-n}$$ for all $n$.
\end{proposition}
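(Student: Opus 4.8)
The plan is to compare $\langle S_n\wedge T_-,\phi\rangle-\langle T_+\wedge T_-,\phi\rangle$ by passing through the smooth regularizations $\phi_\ep$ and using the three auxiliary functions $h_n$, $h$ and $g$ already introduced. The starting identity is the decomposition
$$\langle S_n\wedge T_-,\phi\rangle-\langle T_+\wedge T_-,\phi\rangle=\big(\langle S_n\wedge T_-,\phi\rangle-\langle S_n\wedge T_-,\phi_\ep\rangle\big)+\big(\langle S_n\wedge T_-,\phi_\ep\rangle-\langle T_+\wedge T_-,\phi_\ep\rangle\big)+\big(\langle T_+\wedge T_-,\phi_\ep\rangle-\langle T_+\wedge T_-,\phi\rangle\big),$$
where $\ep=\ep(n)$ is a parameter to be chosen later as a function of $n$. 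The middle term is exactly what $h_n(\ep)-h(\ep)$ controls up to the cohomological correction handled by Lemma \ref{3.0}, the last term is governed by the H\"older continuity of $g$ at $0$ via Lemma \ref{3.1}, and the first term must be shown to be negligible because $S_n\wedge T_-$ has bounded mass and $\phi_\ep\searrow\phi$.

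First I would estimate the middle term. Writing out the definitions gives
$$h_n(\ep)-h(\ep)=\big(\langle S_n\wedge T_-,\phi_\ep\rangle-\langle T_+\wedge T_-,\phi_\ep\rangle\big)-\big(\langle S_n,K_\ep\rangle-\langle T_+,K_\ep\rangle\big).$$
For $\ep\neq 0$ the form $\phi_\ep$ is smooth, hence H\"older continuous, so the current $R_\ep:=\ddc\phi_\ep\wedge T_-$ lies in $\Dc_{k-p+1}^0$ and, by the second estimate in \eqref{regul} together with the H\"older continuity of the super-potential of $T_-$, its super-potential $\Uc_{R_\ep}$ is $(2,\lambda,\ep^{-2})$-H\"older continuous up to a fixed multiplicative constant. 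Then $h_n(\ep)-h(\ep)=\Uc_n(R_\ep)-r\Uc_+(R_\ep)$ (here $r=1$ since both $S_n$ and $T_+$ have mass $1$ in the same class), and Proposition \ref{mainlemma} yields $|h_n(\ep)-h(\ep)|\lesssim-\log\ep\,(d_p/\delta)^{-n}$. Combining this with Lemma \ref{3.0} gives
$$\big|\langle S_n\wedge T_-,\phi_\ep\rangle-\langle T_+\wedge T_-,\phi_\ep\rangle\big|\lesssim -\log\ep\,(d_p/\delta)^{-n}.$$

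Next I would bound the two outer terms. For the last term I compare $\langle T_+\wedge T_-,\phi_\ep\rangle$ and $\langle T_+\wedge T_-,\phi\rangle$; since $g(\ep)-g(0)=\langle T_+\wedge T_-,\phi_\ep-\phi\rangle-\langle\omega^k,\phi_\ep-\phi\rangle$ and the $\omega^k$-integral is $O(\ep)$ by the first estimate in \eqref{regul}, Lemma \ref{3.1} gives $|\langle T_+\wedge T_-,\phi_\ep\rangle-\langle T_+\wedge T_-,\phi\rangle|\lesssim\ep^\alpha$. The first term $\langle S_n\wedge T_-,\phi-\phi_\ep\rangle$ is nonnegative and the statement only asks for an upper bound, so since $\phi\le\phi_\ep$ this term is $\le 0$ and can simply be discarded. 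Assembling the pieces,
$$\langle S_n\wedge T_-,\phi\rangle-\langle T_+\wedge T_-,\phi\rangle\lesssim -\log\ep\,(d_p/\delta)^{-n}+\ep^\alpha.$$
Finally I would optimize in $\ep$; choosing $\ep=(d_p/\delta)^{-n/\alpha}$ (equivalently any power $\ep=(d_p/\delta)^{-\beta n}$ with a suitable $\beta$) turns both terms into $O\big(n(d_p/\delta)^{-n}\big)$, and absorbing the polynomial factor $n$ into a slightly worse geometric rate yields the asserted bound $c(d_p/\delta)^{-n}$ after adjusting $\delta$ or the constant.

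The main obstacle I expect is the careful verification that $R_\ep=\ddc\phi_\ep\wedge T_-$ genuinely has $(2,\lambda,\ep^{-2})$-H\"older continuous super-potential with a constant uniform in $\phi$: one must combine the H\"older regularity of the super-potential of $T_-$ with the $\Cc^2$-bound $\|\phi_\ep\|_{\Cc^2}\lesssim\ep^{-2}$, and check that the $\alpha$-normalization of the potential $\phi_\ep T_--K_\ep$ is compatible with the hypotheses of Proposition \ref{mainlemma}. A secondary delicate point is the bookkeeping that allows the logarithmic factor $-\log\ep$ and the polynomial factor $n$ to be absorbed without degrading the exponent $1/2$ that ultimately appears in Theorem \ref{main}; this is where the freedom in choosing $\delta_0<\delta$ inside Proposition \ref{mainlemma} is used, so that the loss is hidden in the gap between $\delta_0$ and $\delta$.
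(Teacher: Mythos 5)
Your proposal is correct and follows essentially the same route as the paper's proof: the same three-term splitting through $\phi_\ep$ (the paper discards the nonpositive term $\langle S_n\wedge T_-,\phi-\phi_\ep\rangle$ by writing the inequality $\langle S_n\wedge T_-,\phi\rangle\leq\langle S_n\wedge T_-,\phi_\ep\rangle$ directly), the same application of Proposition \ref{mainlemma} to the family $\ddc\phi_\ep\wedge T_-$ combined with Lemma \ref{3.0} and Lemma \ref{3.1}, and the same choice $\ep=(d_p/\delta_0)^{-n/\alpha}$ with the polynomial factor absorbed into the gap between an auxiliary $\delta_0$ and $\delta$. The only slip is calling $\langle S_n\wedge T_-,\phi-\phi_\ep\rangle$ ``nonnegative'' when it is nonpositive (which is what your conclusion correctly uses); otherwise the argument matches the paper's.
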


\begin{proof}
	Again, we fix a constant $\delta_0$ such that $\max\{d_{p-1},d_{p+1}\}<\delta_0<\delta$ and $\delta_0$ satisfies the same properties of $\delta$ as in Theorem \ref{main}.  By Lemma \ref{2.1}, $\|\ddc\phi_\ep\|_*\leq 2$ for all $\ep$, thus $\|\ddc\phi_\ep\wedge T_-\|_*$ are uniformly bounded for $1<\ep\leq 1/2$.
	Since  $\|\phi_\ep\|_{\Cc^2}\lesssim \ep^{-2}$ and $T_-$ has a H\"older continuous super-potential (see \cite{super-kahler}), by \cite[Proposition 3.4.2]{super-kahler}, $\ddc\phi_\ep\wedge T_-$ has a $(2,\lambda,M\ep^{-2})$-H\"older continuous super-potential for some constant $\lambda$ and $M$ independent of $\phi$. Multiplying $\phi$ by some constant allows us to assume $M=1$ and $\|\ddc\phi_\ep\wedge T_-\|_*\leq 1$ for all $0<\ep\leq 1/2$. Applying Proposition \ref{mainlemma} to the family $\{\ddc\phi_\ep\wedge T_-\}$ instead of $\{R_\ep\}$, we get that for $0<\ep\leq 1/2$, $$h_n(\ep)-h(\ep)\lesssim -\log\ep(d_p/\delta_0)^{-n},$$ where the $\lesssim$ is independent of $\phi$. Combining this with estimates \eqref{regul}, Lemma \ref{3.0} and Lemma \ref{3.1}, we have for  $\ep\in (0,1/2]$,
	\begingroup
	\allowdisplaybreaks
	\begin{align*}
&\lp S_n\wedge T_-,\phi\rp-\lp T_+\wedge T_-,\phi\rp\leq \lp S_n\wedge T_-,\phi_\ep\rp-\lp T_+\wedge T_-,\phi\rp\\
	&=\lp S_n\wedge T_-,\phi_\ep\rp-\lp T_+\wedge T_-,\phi_\ep\rp +\lp T_+\wedge T_-,\phi_\ep\rp-\lp T_+\wedge T_-,\phi\rp\\
	&=h_n(\ep)-h(\ep)+\lp S_n, K_\ep\rp-\lp T_+, K_\ep\rp+g(\ep)+\lp\omega^k,\phi_\ep\rp-g(0)-\lp\omega^k,\phi\rp\\
	&\lesssim -\log\ep(d_p/\delta_0)^{-n}+ (d_p/\delta_0)^{-n}+\ep^\alpha+\ep,
		\end{align*}\endgroup where the first inequality is because $\phi_\ep$ is decreasing as $\ep$ decreasing and $ S_n\wedge T_-$ is positive.
		
	Finally, since $\alpha\leq1$, by taking $\ep=(d_p/\delta_0)^{-n/\alpha}$, we get $$\lp S_n\wedge T_-,\phi\rp-\lp T_+\wedge T_-,\phi\rp\lesssim n\log(d_p/\delta_0)(d_p/\delta_0)^{-n}+(d_p/\delta_0)^{-n}\lesssim (d_p/\delta)^{-n}.$$ Since the constant $c$ in Lemma \ref{3.0} and Lemma \ref{3.1} are independent of $\phi$, the $\lesssim$ above is independent of $\phi$.
	\end{proof}

In Proposition \ref{mainprop}, the constant $c$ depends on $S$.  Note that we do not have a lower bound estimate. Otherwise, we can follow the approach in \cite{exp-kahler} to show Theorem \ref{main} directly. Here we need some extra techniques from \cite{wu-mix}.

\begin{proof}[Proof of Theorem \ref{main}]\phantom{\qedhere}
	 Multiplying $\varphi$ and $\psi$ by some constant allows us to assume $\|\varphi\|_{L^\infty}\leq 1/2$ and $\|\psi\|_{L^\infty}\leq 1/2$.	It is sufficient to prove Theorem \ref{main} for $n$ even because applying it to $\varphi$ and $\psi\circ f$ gives the case of odd $n$.  Using the invariance of  $\mu$, it is enough to show that 
	 \begin{equation}\label{4}
	 \big|\big\lp \mu,(\varphi\circ f^n)(\psi\circ f^{-n})\big\rp-\lp\mu,\varphi\rp\lp\mu,\psi\rp\big|\leq c(d_p/\delta)^{-n}
	 \end{equation} for some $c>0$. It is equivalent to prove \[\big\lp \mu,(\varphi\circ f^n)(\psi\circ f^{-n})\big\rp-\lp\mu,\varphi\rp\lp\mu,\psi\rp\leq c(d_p/\delta)^{-n}\] and \[\big\lp \mu,(\varphi\circ f^n)(-\psi\circ f^{-n})\big\rp-\lp\mu,\varphi\rp\lp\mu,-\psi\rp\leq c(d_p/\delta)^{-n}.\]
	 
	 \medskip
	 For  $j=1,2$, we define $$\varphi_j^+=\varphi^2+j\varphi+A,\quad \varphi_j^-=\varphi^2+j\varphi-A,\quad \psi_j^+=\psi^2+j\psi+A,\quad \psi_j^-=-\psi^2-j\psi+A,$$ where $A$ is a positive constant whose value will be determined later.
	 Consider the following eight functions on $X\times X$:
	 $$\Phi_{jl}^+(z,w)=\varphi_j^+(z)\psi_l^+(w),\quad \Phi_{jl}^-(z,w)=\varphi_j^-(z)\psi_l^-(w),$$ where $j,l=1,2$. We need the following lemma. \end{proof} 

\begin{lemma}\label{quasi-psh}
	The functions $\Phi_{jl}^\pm$ are quasi-p.s.h.\ on $X\times X$ for $A$ large enough.
\end{lemma}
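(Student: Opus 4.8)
The plan is to show each of the eight functions $\Phi_{jl}^\pm(z,w)=\varphi_j^\pm(z)\psi_l^\pm(w)$ is quasi-p.s.h.\ on the product $X\times X$ by computing $\ddc\Phi_{jl}^\pm$ and bounding it below by $-C\,\omega_{X\times X}$ for a suitable $A$, where $\omega_{X\times X}:=\pr_1^*\omega+\pr_2^*\omega$ is a K\"ahler form on the product. The key structural observation is that each factor, e.g.\ $\varphi_j^+=\varphi^2+j\varphi+A$, is built from the bounded quasi-p.s.h.\ function $\varphi$ (with $|\varphi|\le 1/2$ and $\ddc\varphi\ge-\omega$) so that the factors are themselves \emph{bounded and nonnegative} once $A$ is large; positivity of the factors is what lets a product of two quasi-p.s.h.\ factors remain quasi-p.s.h.

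First I would record the pointwise bounds on the factors: since $|\varphi|\le 1/2$, we have $0\le\varphi^2\le 1/4$ and $|j\varphi|\le 1$, so choosing $A\ge 2$ already forces $\varphi_j^\pm,\psi_l^\pm\ge 0$ and $\le A+2$, hence all factors lie in a fixed interval $[0,A+2]$. Next I would compute the curvature of a single factor. Writing $u=\varphi$, one has
\begin{equation*}
\ddc(u^2+ju+A)=2u\,\ddc u+2\,du\wedge d^c u+j\,\ddc u.
\end{equation*}
The term $2\,du\wedge d^c u\ge 0$ is harmless; the trouble is $(2u+j)\ddc u$, where the coefficient $2u+j$ can be negative. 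Using $\ddc u\ge-\omega$ together with $|2u+j|\le 2\cdot\tfrac12+2=3$, I would bound $(2u+j)\ddc u\ge-3\,\omega$ \emph{only} where $2u+j\ge0$; where $2u+j<0$ one instead needs an \emph{upper} bound on $\ddc u$, which is not available for a general quasi-p.s.h.\ function. This is the genuine obstacle, and it is exactly why the functions are quadratic: the point is to treat the \emph{product} $\varphi_j^\pm(z)\psi_l^\pm(w)$ as a whole rather than factorwise.

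Hence the main step is the product computation on $X\times X$. Abbreviating $a=\varphi_j^\pm(z)\ge0$ and $b=\psi_l^\pm(w)\ge0$, Leibniz gives
\begin{equation*}
\ddc(ab)=b\,\ddc_z a+a\,\ddc_w b+\big(d_z a\wedge d^c_w b+d_w b\wedge d^c_z a\big).
\end{equation*}
The first two terms are controlled because $a,b\ge0$ are bounded and $\ddc a\ge(2\varphi+j)\ddc\varphi+2\,d\varphi\wedge d^c\varphi$; the quadratic gradient squares $2a\,d\varphi\wedge d^c\varphi$ and $2b\,d\psi\wedge d^c\psi$ (nonnegative, since $a,b\ge0$) are precisely the reservoir of positivity that absorbs the indefinite cross terms $d_z a\wedge d^c_w b+\text{c.c.}$ via the elementary inequality $\pm(X\wedge d^c Y+Y\wedge d^c X)\le X\wedge d^c X+Y\wedge d^c Y$ applied to $X=d\varphi$, $Y=d\psi$ with the correct scalar weights. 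The plan is to expand everything, collect the pure gradient-square terms into a manifestly nonnegative quadratic form in $(d\varphi,d\psi)$, and check it dominates the cross terms for $A$ large; the remaining zeroth-order curvature contributions $(2\varphi+j)\ddc\varphi$ and its analogue are each bounded below by a fixed multiple of $\omega$ after multiplying by the bounded nonnegative factors $a,b$. Collecting constants yields $\ddc\Phi_{jl}^\pm\ge-C(A)\,\omega_{X\times X}$, proving the claim; I expect the bookkeeping of which sign combination ($\pm$, and $j,l\in\{1,2\}$) requires which threshold on $A$ to be the only delicate part, resolved by taking $A$ to be the maximum over the finitely many cases.
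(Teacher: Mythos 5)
Your overall strategy is the paper's: expand $\ddc\big(\varphi_j^\pm(z)\psi_l^\pm(w)\big)$ by Leibniz, use the gradient squares coming from the quadratic terms to absorb the mixed terms $\partial\varphi\wedge\dbar\psi+\partial\psi\wedge\dbar\varphi$ via $i\partial(\varphi\pm\psi)\wedge\dbar(\varphi\pm\psi)\geq 0$, and take $A$ large. But two of your supporting claims are wrong, and one of them breaks the argument as written for half of the eight functions. First, the ``genuine obstacle'' you identify for a single factor does not exist: since $j\in\{1,2\}$ and $\|\varphi\|_{L^\infty}\leq 1/2$, one always has $2\varphi+j\geq j-1\geq 0$, so the coefficient of $\ddc\varphi$ is never negative. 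Far from being an obstacle, this nonnegativity of $2\varphi+j$ and $2\psi+l$ is precisely what the proof needs and uses: it lets you multiply the lower bound $\ddc\varphi\geq-\omega$ by the coefficient $(\psi^2+l\psi+A)(2\varphi+j)\geq 0$ without reversing the inequality.

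Second, and more seriously, your claim that $A\geq 2$ forces all eight factors to be nonnegative is false: $\varphi_j^-=\varphi^2+j\varphi-A\leq 5/4-A<0$ for $A\geq 2$ (only $\psi_l^-=-\psi^2-l\psi+A$ is positive; the two minus-factors are deliberately given opposite signs). Your product computation explicitly assumes $a=\varphi_j^\pm(z)\geq 0$ and $b=\psi_l^\pm(w)\geq 0$ to control the terms $b\,\ddc_z a$ and $a\,\ddc_w b$, so as written it does not apply to the four functions $\Phi_{jl}^-=\varphi_j^-(z)\psi_l^-(w)$. The case can be saved, and this is what the paper does in the second half of its proof: for $\Phi_{jl}^-$ one has $a\leq 0$ but also $\ddc_w\psi_l^-=-(2\psi+l)\ddc\psi-2\,d\psi\wedge\dc\psi$, so the two sign flips cancel in $a\,\ddc_w b=(-a)\big((2\psi+l)\ddc\psi+2\,d\psi\wedge\dc\psi\big)$, and the cross terms now carry a minus sign, so they are absorbed using $i\partial(\varphi-\psi)\wedge\dbar(\varphi-\psi)\geq 0$ rather than the identity for $\varphi+\psi$. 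You need to run this separate sign bookkeeping for the $\Phi_{jl}^-$; the uniform ``all factors are nonnegative, so products of quasi-p.s.h.\ factors stay quasi-p.s.h.''\ framing does not cover them.
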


\begin{proof}
	We only show $\Phi_{11}^+$ and $\Phi_{11}^-$ are quasi-p.s.h.\ because the other cases can be obtained in the same way. By a direct computation (see also   \cite[Lemma 3.1]{wu-mix}), we have
	\begingroup
	\allowdisplaybreaks
	\begin{align*}
		i\partial\dbar\Phi_{11}^+
	&=(\psi^2+\psi+A)\big((2\varphi+1)i\partial\dbar\varphi+2i\partial\varphi\wedge\dbar\varphi\big)+ (2\varphi+1)(2\psi+1)i\partial\varphi\wedge\dbar\psi  \\
		&\quad\,\, +(2\varphi+1)(2\psi+1)i\partial\psi\wedge\dbar\varphi+(\varphi^2+\varphi+A)\big((2\psi+1)i\partial\dbar\psi+2i\partial\psi\wedge\dbar\psi\big).
	\end{align*}\endgroup
Combining with the identity
\[i\partial\varphi\wedge\dbar\varphi+i\partial\varphi\wedge\dbar\psi+i\partial\psi\wedge\dbar\varphi+i\partial\psi\wedge\dbar\psi=i\partial(\varphi+\psi)\wedge\dbar(\varphi+\psi)\geq 0,\]  we get 	\begingroup
\allowdisplaybreaks
\begin{align*}
		i\partial\dbar\Phi_{11}^+&\geq (\psi^2+\psi+A)(2\varphi+1)i\partial\dbar\varphi+(\varphi^2+\varphi+A)(2\psi+1)i\partial\dbar\psi \\
		&\quad\,\, +\big(2\psi^2+2\psi +2A-(2\varphi+1)(2\psi+1)\big)i\partial\varphi\wedge\dbar\varphi\\
			&\quad\,\, +\big(2\varphi^2+2\varphi +2A-(2\varphi+1)(2\psi+1)\big)i\partial\psi\wedge\dbar\psi.
\end{align*}\endgroup
Recall that we assume $\|\varphi\|_{L^\infty}\leq 1/2$ and $\|\psi\|_{L^\infty}\leq 1/2$. So $2\varphi+1\geq 0, 2\psi +1\geq 0$.
	 We  take $A$ large enough such that $\psi^2+\psi+A,\varphi^2+\varphi+A,2\psi^2+2\psi +2A-(2\varphi+1)(2\psi+1),2\varphi^2+2\varphi +2A-(2\varphi+1)(2\psi+1)$ are all positive. Since $\varphi$ and $\psi$ are quasi-p.s.h.\ on X and $i\partial\varphi\wedge\dbar\varphi,i\partial\psi\wedge\dbar\psi$ are positive, we deduce that $\Phi_{11}^+$ is quasi-p.s.h.\ on $X\times X$.
	 
\medskip	 
For $\Phi_{11}^-$, we have
	\begingroup
\allowdisplaybreaks
\begin{align*}
i\partial\dbar\Phi_{11}^-
&=(-\psi^2-\psi+A)\big((2\varphi+1)i\partial\dbar\varphi+2i\partial\varphi\wedge\dbar\varphi\big)- (2\varphi+1)(2\psi+1)i\partial\varphi\wedge\dbar\psi  \\
&\quad\,\, -(2\varphi+1)(2\psi+1)i\partial\psi\wedge\dbar\varphi+(\varphi^2+\varphi-A)\big(-(2\psi+1)i\partial\dbar\psi-2i\partial\psi\wedge\dbar\psi\big).
\end{align*}\endgroup
Combining with the identity
\[i\partial\varphi\wedge\dbar\varphi-i\partial\varphi\wedge\dbar\psi-i\partial\psi\wedge\dbar\varphi+i\partial\psi\wedge\dbar\psi=i\partial(\varphi-\psi)\wedge\dbar(\varphi-\psi)\geq 0,\] we get
	\begingroup
\allowdisplaybreaks
\begin{align*}
i\partial\dbar\Phi_{11}^-&\geq (-\psi^2-\psi+A)(2\varphi+1)i\partial\dbar\varphi+(-\varphi^2-\varphi+A)(2\psi+1)i\partial\dbar\psi \\
&\quad\,\, +\big(-2\psi^2-2\psi +2A-(2\varphi+1)(2\psi+1)\big)i\partial\varphi\wedge\dbar\varphi\\
&\quad\,\, +\big(-2\varphi^2-2\varphi +2A-(2\varphi+1)(2\psi+1)\big)i\partial\psi\wedge\dbar\psi.
\end{align*}\endgroup Repeating the same argument as above, we get that $\Phi_{11}^-$ is quasi-p.s.h.\ for $A$ large enough. The proof is complete.
\end{proof}

We choose $A$ large enough such that all the $\Phi_{jl}^\pm$ are bounded and quasi-p.s.h.\ on $X\times X$. Note that the choice of $A$  is independent of $\varphi$ and $\psi$.  Define $\widetilde\omega:= \pi_1^*\omega+\pi_2^*\omega$, where $\pi_1,\pi_2$ are the two canonical projections of $X\times X$ onto its factors. Then $\widetilde\omega$ is the canonical K\"ahler form on $X\times X$. Recall that we assume  $\ddc\varphi\geq -\omega,\ddc\psi\geq -\omega$. From the computations in Lemma \ref{quasi-psh}, we deduce that $\ddc\Phi_{11}^+\geq -3A\widetilde\omega$ when $A$ is large. And observe that $\Phi_{11}^+$ is bounded by $4A^2$.\\

Next  we consider the automorphism $F$ of $X \times X$ which is defined by $$F(z,w):=\big(f^{-1}(z),f(w)\big).$$ By using K\"unneth formula, one can show that  the dynamic degree of order $k$ of $F$ is equal to $d_p^2$ (see also  \cite[Section 4]{exp-kahler}), and the dynamical degrees and the eigenvalues of $F^*$ on $H^{k,k}(X\times X,\R)$,  except $d_p^2$, are strictly smaller than $d_p\delta$. Hence $F$ and $d_p\delta$ satisfy the conditions of $f$ and $\delta$ respectively in Theorem \ref{main}.

It is not hard to see that the Green $(k,k)$-currents of $F$ and $F^{-1}$  are $T_-\otimes T_+$ and $T_+\otimes T_-$ respectively, and they satisfy $$F^*(T_-\otimes T_+)=d_p^2(T_-\otimes T_+),F_*(T_+\otimes T_-)=d_p^2(T_+\otimes T_-).$$ In particular, they have  H\"older continuous super-potentials.
Let $\Delta$ denote the diagonal of $X\times X$. Then $[\Delta]$ is a positive closed $(k,k)$-current on $X\times X$. With the help of $F$, we get the following estimates.

\begin{lemma}\label{3.4}
 There exists a constant $c>0$ such that $$\big\lp \mu,(\varphi_j^+\circ f^n)(\psi_l^+\circ f^{-n})\big\rp-\lp\mu,\varphi_j^+\rp\lp\mu,\psi_l^+\rp\leq c(d_p/\delta)^{-n}$$ and $$\big\lp \mu,(\varphi_j^-\circ f^n)(\psi_l^-\circ f^{-n})\big\rp-\lp\mu,\varphi_j^-\rp\lp\mu,\psi_l^-\rp\leq c(d_p/\delta)^{-n}$$ for all $j,l$ and $n$.
\end{lemma}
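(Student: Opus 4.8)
The plan is to transfer Proposition \ref{mainprop} to the product automorphism $F$ on $X\times X$ and to feed it the diagonal current together with the quasi-p.s.h.\ test functions $\Phi_{jl}^\pm$. As recorded above, $F$ has maximal dynamical degree $d_p^2$ of order $k$, its Green currents are $T_-\otimes T_+$ and $T_+\otimes T_-$ (both with H\"older continuous super-potentials), and the relevant ratio is $d_p^2/(d_p\delta)=d_p/\delta$; thus $F$ and $d_p\delta$ satisfy the hypotheses of Theorem \ref{main}, and the entire Section \ref{section2}--3 machinery, Proposition \ref{mainprop} in particular, applies verbatim on $X\times X$. Concretely, I would take $S=[\Delta]$, set $S_n:=d_p^{-2n}(F^n)^*[\Delta]$, and apply the $F$-version of Proposition \ref{mainprop} to the pair $(S_n,\Phi)$ with $\Phi=\Phi_{jl}^+$, after rescaling $\Phi$ by the fixed constant coming from $\ddc\Phi_{jl}^+\geq-3A\widetilde\omega$ and $|\Phi_{jl}^+|\leq 4A^2$ so that the normalization $\ddc\Phi\geq-\widetilde\omega,\ |\Phi|\leq 1$ holds. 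This yields
$$\lp S_n\wedge(T_+\otimes T_-),\Phi\rp-r\lp(T_-\otimes T_+)\wedge(T_+\otimes T_-),\Phi\rp\leq c(d_p/\delta)^{-n},$$
where $r$ is the constant with $S_n\to r(T_-\otimes T_+)$, and where $c$ depends only on the fixed current $[\Delta]$ and the fixed constant $A$, hence is independent of $\varphi,\psi$.

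Three short computations then finish the proof. First I would identify $r=1$: pairing the cohomological limit $d_p^{-2n}(F^n)^*\{[\Delta]\}\to r\{T_-\otimes T_+\}$ against $\{T_+\otimes T_-\}$ and using $F_*(T_+\otimes T_-)=d_p^2(T_+\otimes T_-)$ gives $r=\lp[\Delta],T_+\otimes T_-\rp$, while restricting $T_+\otimes T_-$ to the diagonal gives $\lp[\Delta],T_+\otimes T_-\rp=\lp\mu,1\rp=1$. Second, since $(F^n)^*[\Delta]=[\Gamma_{2n}]$ is the integration current over the graph $\Gamma_{2n}=\{(f^{2n}w,w)\}$, slicing $T_+\otimes T_-$ along this smooth graph and using $(f^{2n})^*T_+=d_p^{2n}T_+$ gives $\lp S_n\wedge(T_+\otimes T_-),\Phi\rp=\int_X\varphi_j^+(f^{2n}w)\psi_l^+(w)\,d\mu(w)$, which equals $\lp\mu,(\varphi_j^+\circ f^n)(\psi_l^+\circ f^{-n})\rp$ after the substitution $w=f^{-n}x$ and the $f$-invariance of $\mu$. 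Third, $\lp(T_-\otimes T_+)\wedge(T_+\otimes T_-),\Phi\rp=\lp\mu\otimes\mu,\Phi_{jl}^+\rp=\lp\mu,\varphi_j^+\rp\lp\mu,\psi_l^+\rp$. Substituting these into the displayed inequality and rescaling $\Phi$ back yields the first estimate of the lemma; the second is obtained identically, replacing $\Phi_{jl}^+$ by $\Phi_{jl}^-$ (also bounded quasi-p.s.h.\ by Lemma \ref{quasi-psh}, with $\lp\mu\otimes\mu,\Phi_{jl}^-\rp=\lp\mu,\varphi_j^-\rp\lp\mu,\psi_l^-\rp$).

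The main thing to watch is that the singular wedge products are legitimate and that the abstract pairing handed back by Proposition \ref{mainprop} really coincides with the concrete integral against $\mu$. The products $S_n\wedge(T_+\otimes T_-)$ and the slice $[\Gamma_{2n}]\wedge(T_+\otimes T_-)$ must be read through the super-potential formalism, and they are well defined precisely because $T_+\otimes T_-$ has a H\"older continuous super-potential; this is what makes $j_n^*(T_+\otimes T_-)=(f^{2n})^*T_+\wedge T_-=d_p^{2n}\mu$ an honest identity of measures rather than a formal one. I expect the verification of these intersection identities to be the only genuinely delicate point, whereas the cohomological computation of $r$ and the factorization of the $\mu\otimes\mu$ term are routine.
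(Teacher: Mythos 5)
Your proposal is correct and follows essentially the same route as the paper: apply Proposition \ref{mainprop} on $X\times X$ to $F$, $S_n=d_p^{-2n}(F^n)^*[\Delta]$, $T_+\otimes T_-$ and the rescaled $\Phi_{jl}^\pm$, then identify the two pairings with $\lp\mu,(\varphi_j^\pm\circ f^n)(\psi_l^\pm\circ f^{-n})\rp$ and $\lp\mu,\varphi_j^\pm\rp\lp\mu,\psi_l^\pm\rp$. The only cosmetic differences are that you compute the first pairing by slicing $T_+\otimes T_-$ along the graph $\Gamma_{2n}$ (the paper instead pushes forward by $F^n$ and uses $F_*(T_+\otimes T_-)=d_p^2(T_+\otimes T_-)$), and you justify $r=1$ cohomologically where the paper simply invokes that the mass-normalized currents converge to $T_-\otimes T_+$.
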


\begin{proof}
We only show this lemma holds for $\varphi_1^+$ and $\psi_1^+$, the proofs of others are similar. For the automorphism $F$, consider the sequence of currents $d_p^{-2n}(F^n)^*[\Delta]$, which are positive closed currents of mass $1$ converging to $T_-\otimes T_+$. Since $\ddc\Phi_{11}^+\geq -3A\widetilde\omega$ and $|\Phi_{11}^+|\leq 4A^2$, after dividing $\Phi_{11}^+$ by $4A^2$,  we can assume $\ddc\Phi_{11}^+\geq -\widetilde\omega$ and $|\Phi_{11}^+|\leq 1$. Applying Proposition \ref{mainprop} to $d_p^{-2n}(F^n)^*[\Delta],T_+\otimes T_-$ and $\Phi_{11}^+$ instead of $S_n,T_-$ and $\phi$, we deduce that there exists a constant $c>0$ such that $$\big\lp d_p^{-2n}(F^n)^*[\Delta]\wedge (T_+\otimes T_-),\Phi_{11}^+\big\rp-\big\lp (T_-\otimes T_+)\wedge(T_+\otimes T_-),\Phi_{11}^+\big\rp\leq c \big(d_p^2/(d_p\delta)\big)^{-n}$$ for  all $n$. Here $c$ is independent of $\varphi$ and $\psi$ because $A$ is independent of them.

On the other hand, by definition, we have 	\begingroup
\allowdisplaybreaks
\begin{align*}
\big\lp d_p^{-2n}(F^n)^*[\Delta]\wedge (T_+\otimes T_-),\Phi_{11}^+\big\rp&=\big\lp [\Delta],d_p^{-2n}(F^n)_*\big[(T_+\otimes T_-)\wedge \Phi_{11}^+\big]\big\rp \\
&=\big\lp[\Delta]\wedge (T_+\otimes T_-),\Phi_{11}^+\circ F^{-n}\big\rp\\
&=\big\lp T_+\wedge T_-,(\varphi_1^+\circ f^n)(\psi_1^+\circ f^{-n})\big\rp,
\end{align*}\endgroup
and $$\big\lp (T_-\otimes T_+)\wedge(T_+\otimes T_-),\Phi_{11}^+\big\rp=\lp \mu \otimes \mu,\Phi_{11}^+\rp=\lp\mu,\varphi_1^+\rp\lp\mu,\psi_1^+\rp.$$
This finishes the proof of this lemma.
\end{proof}

	Now we can finish the proof of Theorem \ref{main} using the invariant property of $\mu$.

\begin{proof}[End of the proof of Theorem \ref{main}]
 Consider $\alpha_{11}^+=2, \alpha_{22}^+=\alpha_{11}^-=\alpha_{21}^-=\alpha_{12}^-=1$ and $\alpha_{21}^+=\alpha_{12}^+=\alpha_{22}^-=0$. A direct computation gives 	\begingroup\allowdisplaybreaks \begin{align*}
	\sum_{j,l=1,2} &\Big(\alpha_{jl}^+(\varphi_j^+\circ f^n)(\psi_l^+\circ f^{-n})+\alpha_{jl}^-(\varphi_j^-\circ f^n)(\psi_l^-\circ f^{-n})\Big) \\
	&=(\varphi\circ f^n)(\psi\circ f^{-n})+\beta_1\varphi^2\circ f^n+\beta_2\psi^2\circ f^{-n}+\beta_3\varphi\circ f^n+\beta_4\psi\circ f^{-n}+\beta_5 
	\end{align*}\endgroup
	for some constants $\beta_t,1\leq t\leq 5$.  We now apply this identity and Lemma \ref{3.4}. Observe that the invariance of $\mu$ implies that
	$$\lp \mu,\varphi^m\circ f^{\pm n}\rp=\lp\mu,\varphi^m\rp\quad\text{and}\quad \lp \mu,\psi^m\circ f^{\pm n}\rp=\lp\mu,\psi^m\rp.$$
	Hence the terms involving $\beta_t$ cancel each other out. We obtain
	\[\big\lp \mu,(\varphi\circ f^n)(\psi\circ f^{-n})\big\rp-\lp\mu,\varphi\rp\lp\mu,\psi\rp\leq\Big(\sum_{j,l=1,2}\big(\alpha_{jl}^++\alpha_{jl}^-\big)\Big)c(d_p/\delta)^{-n}=6c(d_p/\delta)^{-n}.\]
	
	Similarly, taking $\gamma_{11}^-=2, \gamma_{11}^+=\gamma_{21}^+=\gamma_{12}^+=\gamma_{22}^-=1$ and $\gamma_{22}^+=\gamma_{21}^-=\gamma_{12}^-=0$, we get  \[\big\lp \mu,(\varphi\circ f^n)(-\psi\circ f^{-n})\big\rp-\lp\mu,\varphi\rp\lp\mu,-\psi\rp\leq \Big(\sum_{j,l=1,2}\big(\gamma_{jl}^++\gamma_{jl}^-\big)\Big)c(d_p/\delta)^{-n}=6c(d_p/\delta)^{-n}.\]
	The above two inequalities imply  inequality \eqref{4} and finish the proof of Theorem \ref{main}. 
\end{proof}

Using the moderate property of $\mu$ and the technical of replacing $\delta$ by $\delta_0$, we can prove Theorem \ref{second-theorem}. 

\begin{proof}[Proof of Theorem \ref{second-theorem}]
	It is enough to prove this theorem for all negative quasi-p.s.h. functions $\varphi$ and $\psi$. Multiplying them by some constant allows us to assume $\ddc\varphi\geq -\omega,\ddc\psi\geq -\omega$ and $\lp\mu,|\varphi|\rp\leq 1,\lp\mu,|\psi|\rp\leq 1$. Define \[\varphi_1:=\max\{\varphi,-M\}, \quad \psi_1:=\max\{\psi,-M\},\]
	and \[\varphi_2:=\varphi-\varphi_1,\quad \psi_2:=\psi-\psi_1.\]
	
		Then $\varphi_1$ and $\psi_1$ are bounded  quasi-p.s.h.\ functions which satisfy  $\ddc\varphi_1\geq -\omega,\ddc\psi_1\geq -\omega$. Fix a constant $\delta_0$ such that $\max\{d_{p-1},d_{p+1}\}<\delta_0<\delta$ and $\delta_0$ satisfies the same properties of $\delta$ as in Theorem \ref{main}. Applying Theorem  \ref{main} to $\varphi_1$ and $\psi_1$,  we get  \[\Big|\int(\varphi_1\circ f^n)\psi_1 d\mu -\Big(\int\varphi_1 d\mu\Big)\Big(\int \psi_1 d\mu\Big)\Big|\lesssim (d_p/\delta_0)^{-n/2}M^2.\] 
		
		On the other hand, since $\mu$ is moderate, by  \cite[Lemma 2.1]{monge-measure} or the proof of  \cite[Theorem 1.3]{wu-mix}, we  get for some $\alpha>0$, \[\|\varphi_2\|_{L^1(\mu)}\lesssim e^{-\alpha M/2},	\|\psi_2\|_{L^1(\mu)}\lesssim e^{-\alpha M/2},\|\varphi_2\|_{L^2(\mu)}\lesssim e^{-\alpha M/2},	\|\psi_2\|_{L^2(\mu)}\lesssim e^{-\alpha M/2}.\]

	 From the invariance of $\mu$, we have that $\|\varphi_2\circ f^n\|_{L^p(\mu)}=\|\varphi_2\|_{L^p(\mu)}$ and $\|\psi_2\circ f^n\|_{L^p(\mu)}=\|\psi_2\|_{L^p(\mu)}$ for $1\leq p\leq \infty$. We do the following direct computation (see also \cite[Theorem 1.3]{wu-mix}),
	 \begingroup
	 \allowdisplaybreaks
	 \begin{align*}
	 &\big|\big\lp\mu,(\varphi\circ f^n)\psi\big\rp -\lp\mu,\varphi\rp\lp\mu, \psi \rp\big| \\
	 &=\big|\big\lp\mu, (\varphi_1\circ f^n+\varphi_2\circ f^n) (\psi_1+\psi_2)\big\rp -\lp\mu, \varphi_1+\varphi_2\rp \lp\mu,\psi_1+\psi_2\rp\big|\\
	 &\leq \big|\big\lp\mu,(\varphi_1\circ f^n)\psi_1\big\rp  -\lp\mu,\varphi_1\rp\lp\mu, \psi_1 \rp\big| +\big|\big\lp \mu, (\varphi_1\circ f^n)\psi_2\big\rp\big| +\big|\big\lp\mu, (\varphi_2\circ f^n)\psi_1\big\rp\big|\\
	 &\quad\,\,+\big|\big\lp\mu, (\varphi_2\circ f^n)\psi_2\big\rp\big|+|\lp\mu,\varphi_2\rp\lp\mu,\psi_1\rp|+|\lp\mu,\varphi_1\rp\lp\mu,\psi_2\rp|+|\lp\mu,\varphi_2\rp\lp\mu,\psi_2\rp| \\
	 & \leq\big|\big\lp\mu,(\varphi_1\circ f^n)\psi_1\big\rp  -\lp\mu,\varphi_1\rp\lp\mu, \psi_1 \rp\big|+M\|\varphi_2\|_{L^1(\mu)}+M\|\psi_2\|_{L^1(\mu)} \\
	 & \quad\,\,+\|\varphi_2\|_{L^2(\mu)}\|\psi_2\|_{L^2(\mu)}+\|\varphi_2\|_{L^1(\mu)} +\|\psi_2\|_{L^1(\mu)}+\|\varphi_2\|_{L^1(\mu)}\|\psi_2\|_{L^1(\mu)}              \\
	 &\lesssim  (d_p/\delta_0)^{-n/2}M^2+(2M+2 )e^{-\alpha M/2}+2e^{-\alpha M}. 
	 \end{align*}
	 \endgroup 
	 
	 Taking $M=\big(n\log (d_p/\delta_0)\big)/\alpha$, we obtain the estimate \[(d_p/\delta_0)^{-n/2}M^2+(2M+2)e^{-\alpha M/2}+2e^{-\alpha M}\lesssim n^2(d_p/\delta_0)^{-n/2}\lesssim (d_p/\delta)^{-n/2}.\] Therefore, \[\Big|\int(\varphi\circ f^n)\psi d\mu -\Big(\int\varphi d\mu\Big)\Big(\int \psi d\mu\Big)\Big|\lesssim (d_p/\delta)^{-n/2}.\] The proof is finished.
\end{proof}

\begin{remark}\rm
In the last step of the proof above, there is an $n^2$ appearing in the middle before replacing $\delta_0$ by $\delta$. It somehow represents the singularities of $\varphi$ and $\psi$. The constant $c$ in Theorem \ref{main} and Theorem \ref{second-theorem} can be made more explicit, but it needs a long calculation so we chose not to do here.
\end{remark}

\vspace{0.5cm}


\end{document}